\tikzset{->-/.style={decoration={  markings,  mark=at position #1 with
			{\arrow{>}}},postaction={decorate}}}
\tikzset{-<-/.style={decoration={  markings,  mark=at position #1 with
			{\arrow{<}}},postaction={decorate}}}
\newtheorem{thm}{Theorem}[section]
\newtheorem{lem}[thm]{Lemma}
\newcommand\<{\langle}
\renewcommand\>{\rangle}
\newcommand\iv[1]{\underline{#1}}
\newtheorem{prop}[thm]{Proposition}
\theoremstyle{definition}
\newtheorem{Def}[thm]{Definition}
\newtheorem{Ex}[thm]{Example}
\newtheorem{remark}[thm]{Remark}
\numberwithin{equation}{section}
\def\Irr{\operatorname{Irr}}
\newcommand\Br{\operatorname{Br}}
\newcommand\CBr{\operatorname{CT}}
\newcommand\Co{\operatorname{Co}}
\newcommand\per{\operatorname{per}}
\newcommand\pvd{\operatorname{pvd}}
\newcommand\fold{f:\Lambda\to\Delta}
\newcommand\Fold{f:\overrightarrow{\Lambda}\to\overrightarrow{\Delta}}
\newcommand{\EG}{\operatorname{EG}}       
\newcommand{\SEG}{\operatorname{SEG}}       
\newcommand{\C}{\mathcal{C}}
\newcommand{\CEG}{\operatorname{CEG}}             
\newcommand{\uCEG}{\underline{\operatorname{CEG}}}
\newcommand{\Cone}{\operatorname{Cone}}
\newcommand{\ad}{\operatorname{ad}}
\def\ceg{\operatorname{\mathcal{CEG}}}
\def\uceg{\underline{\ceg}}
\def\oi{\mathbf{i}}     
\def\oj{\mathbf{j}}
\def\n{\mathbf{n}}
\def\2{\mathbf{2}}
\def\1{\mathbf{1}}
\def\3{\mathbf{3}}
\def\4{\mathbf{4}}
\def\ol{\mathbf{l}}
\def\I{I_2(m)}
\def\Y{\mathbf{Y}}
\def\w{\mathbf{w}}
\newcommand{\W}{\mathbf{W}}
\def\x{\mathbf{x}}
\def\y{\mathbf{y}}
\newcommand{\D}{\operatorname{\mathcal{D}}}
\renewcommand{\k}{\mathbf{k}}
\def\wtq{\mathbf{Q}} 
\def\nn{node{$\bullet$}}
\def\h{\mathcal{H}}
\title{Cluster braid groups of Coxeter-Dynkin diagrams}
\author{Zhe Han, Ping He and Yu Qiu}
\address{Hz: School of Mathematics and Statistics
	Henan University 475004 Kaifeng China}
 \email{zhehan@vip.henu.edu.cn}
\address{Hp: Yanqi Lake Beijing Institute of Mathematical Sciences and Applications, 101408 Beijing, China}
\email{pinghe@bimsa.cn}
\address{Qy: Yau Mathematical Sciences Center and Department of Mathematical Sciences, Tsinghua University, 100084 Beijing, China. \&  Beijing Institute of Mathematical Sciences and Applications, Yanqi Lake, Beijing, China}
\email{yu.qiu@bath.edu}
\subjclass[2020]{52B05, 52B11, 20F36, 13F60, 05E10.}
\begin{document}
	
\begin{abstract}
Cluster exchange groupoids are introduced by King-Qiu as an enhancement of cluster exchange graphs to study stability conditions and quadratic differentials. In this paper, we introduce the exchange groupoid for any finite Coxeter-Dynkin diagram $\Delta$ and show that the fundamental group of which is isomorphic to the corresponding braid group associated with $\Delta$.
\end{abstract}
\keywords{braid groups, cluster exchange groupoids, Coxeter-Dynkin diagrams, generalized associahedron}

\maketitle

\section{Introduction}
Coxeter groups considered as reflection groups have a vibrant structure of geometry and algebra. Coxeter groups of finite types are classified Coxeter-Dynkin diagrams $\Delta$ \cite{Bo68}. A presentation of a Coxeter group is encoded in the Coxeter diagram. For each Coxeter group, the (Artin's) braid group is presented by the same generators $s_i$ and relations except $s^2_i=1$. Thus Coxeter groups are quotient groups of the corresponding braid groups. It is well-known that the braid groups are the fundamental groups of the space of regular orbits for which Coxeter groups are the corresponding complex reflection groups \cite{Br72, De, V83}.

The braid groups also appear in the theory of cluster algebras and stability conditions on triangulated categories \cite{ST, Q15, QW, Q16, KQ2}. In \cite{KQ2}, the braid group corresponding to a simply laced Dynkin diagram is realized as the cluster braid group, which is constructed using the combinatorial of the cluster category.
In this paper, we focus on the braid group associated to any finite type \emph{weighted graph/Coxeter graph} $\Delta$.
We give an alternative realization of this group by the corresponding cluster braid group.

Given an ordinary quiver with potential $(Q, W)$, let $\Lambda$ be the mutation equivalent class of $(Q,W)$. There is a Ginzburg dg algebra $\Gamma:=\Gamma(Q,W)$. The \emph{cluster category} $\C( \Lambda)\colon=\per(\Gamma)/\pvd(\Gamma)$ has an exchange graph $\uCEG(\Lambda)$ with vertices corresponding to cluster tilting objects and
unoriented edges corresponding to mutations between cluster tilting objects, cf. \cite{IY, K12}.
In many classes of examples, $\uCEG(\Lambda)$ can be decomposed into squares and pentagons
(i.e., \cite{FST} for the surface case and \cite{Q15} for the simply laced Dynkin case).
This is related to the pentagon identity of (quantum) dilogarithms, cf. \cite{K11}.
From the point view of tilting theory (i.e., simple HRS-tilting of hearts in $\pvd(\Gamma)$),
it is natural to consider the oriented version of $\uCEG(\Lambda)$ \cite{KQ} by replacing each edge with an oriented 2-cycle.
The cluster exchange groupoid $\ceg(\Lambda)$ is the quotient of the path groupoid of $\CEG(\Lambda)$ by the square and pentagon relations (and an extra hexagon relation in general).
The cluster braid group $\CBr_{\Lambda}(\Y)$ is defined to be the fundamental group of $\ceg(\Lambda)$ based at a vertex $\Y$ in $\CEG(\Lambda)$ (\Cref{def:ceg-wtq}).
The generators of which are 2-cycles (called local twists) mentioned above.

Cluster categories of simply laced (ADE) Dynkin type correspond to finite root systems and thus to finite crystallographic Coxeter groups \cite{FZ2}.
For a Coxeter diagram of type BCFG, one could define the cluster category using folding techniques,
cf. \cite{CQ}.
However, for finite non-crystallographic Coxeter graphs (i.e., of types H and I),
there is no automorphism of the diagram inducing the embedding of the corresponding root systems, cf. \cite{Dy, Lu}.
Thus there is no cluster category and cluster exchange graph of these types yet, from a categorical point of view.
In papers \cite{DT, DT2}, the authors extend the cluster theory to the quivers of types H and I
by mutations of matrices with real entries.
Their construction is based on the weighted folding technique, cf. \cite{Cr} for more details.
In this paper, we introduce and study cluster exchange groupoid $\ceg$ for all finite-type Coxeter-Dynkin diagrams.

Given a weighted graph $\overrightarrow \Delta$, one could construct a simplicial map $f\colon \overrightarrow \Lambda\to \overrightarrow \Delta$ from a simply laced quiver $\overrightarrow \Lambda$ which is called a weighted folding (see Definition \ref{def:fold}). For each finite weighted folding $f: \overrightarrow \Lambda\to \overrightarrow \Delta$, we define the vertex set of $\CEG(\Delta)$ as a subset of $\CEG(\Lambda)$. More precisely, each vertex is a cluster tilting object (CTO) $\Y$ in $\C(\Lambda)$, which is compatible with weighted folding, and hence is called a weighted CTO. The edges of $\CEG(\Delta)$ correspond to paths in $\ceg(\Lambda)$. In the finite type case, a key point is that the definition of $\CEG(\Delta)$ is independent of the choice of $f$. To obtain $\ceg(\Delta)$, we need to add an additional $(m+2)$-gon relation for each edge with weight $m$, which generalizes the square and pentagon relations \cite{KQ2} in the simply laced case. Along the way, we also show that $\uCEG(\Delta)$ decomposes into various $(m+2)$-gons. The \emph{cluster braid group} $\CBr_\Delta(\Y)$ is the subgroup of the fundamental group  $\pi_1(\ceg(\Delta),\Y)$, which is generated by local twists $t_\oi$ (indexed by $\oi\in \Lambda_0$).

On the other hand, for a weighted cluster tilting object (weighted CTO for short) $\Y$, there is an associated weighted quiver with potential $(\wtq_\Y,\W_\Y)$. We define the associated braid group $\Br(\wtq_\Y,\W_\Y)$ given by an explicit presentation with generators $b_\oi$ corresponding to vertices $\oi\in \wtq_0$ of $\wtq$ and relations corresponding to edges and terms in the potential $\W$, similarly to \cite{QZy}.
Our main theorem is the following,
which generalizes \cite[Thm.~2.16]{KQ2} for the ADE case.

\begin{thm}[\Cref{thm:main}]
Let $\overrightarrow{\Delta}$ be a finite weighted quiver. For any weighted CTO $\Y$ in $\CEG(\Delta)$,
there is a natural isomorphism
\begin{gather*}
    \begin{array}{rcl}
       \Psi: \CBr_\Delta(\Y)  & \longrightarrow & \Br(\wtq_\Y,\W_\Y) \\
        t_\oi & \mapsto & b_\oi
    \end{array}
\end{gather*}
where $t_\oi$ (resp. $b_\oi$) is the generator of $\CBr_\Delta(\Y)$ (resp. $\Br(\wtq_\Y,\W_\Y)$) corresponding to $\oi\in\Delta_0$.
\end{thm}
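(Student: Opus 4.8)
The plan is to construct $\Psi$ together with an inverse map $\Phi\colon\Br(\wtq_\Y,\W_\Y)\to\CBr_\Delta(\Y)$, $b_\oi\mapsto t_\oi$, and to verify that each is a well-defined group homomorphism; since both fix the chosen generating sets pointwise, they are then mutually inverse isomorphisms. Surjectivity of $\Psi$ needs no argument: $\CBr_\Delta(\Y)$ is by definition generated by the local twists $t_\oi$, which map onto the generators $b_\oi$ of $\Br(\wtq_\Y,\W_\Y)$. So the entire content is the two well-definedness statements, and of these the second carries all the difficulty.

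For $\Psi$ to be well-defined one must check that each defining relation of $\CBr_\Delta(\Y)$ maps to a relation in $\Br(\wtq_\Y,\W_\Y)$. These relations are read off from the faces of $\ceg(\Delta)$, i.e.\ from the $(m+2)$-gon relations (the cases $m=2,3$ being the squares and pentagons of \cite{KQ2}). I would first write down, for each such face $F$ and a chosen path from $\Y$ to $F$, the boundary loop of $F$ as an explicit word in the local twists: along the edges of $F$ a local twist at one vertex is conjugated by a ``rotation'' element to a local twist at the adjacent vertex, exactly as in the ADE picture. Reading the resulting word, an $(m+2)$-gon attached to a weight-$m$ edge of $\wtq_\Y$ yields precisely the $\I$-braid relation between the corresponding $b_\oi,b_\oj$, while the faces not meeting $\Y$ yield, after conjugation, the potential-term relations of $\Br(\wtq_\Y,\W_\Y)$. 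This bookkeeping parallels \cite{KQ2}, but must be carried out with the weighted folding $\Fold$ in hand so that the weights $m$, and hence the lengths of the braid relations, are tracked correctly.

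The harder direction is the well-definedness of $\Phi$: one must show that the defining relations of $\Br(\wtq_\Y,\W_\Y)$ --- the commutations, the $\I$-braid relations attached to the weighted edges, and the relations coming from the terms of $\W_\Y$ --- already hold among the local twists $t_\oi$, i.e.\ that the associated loops are null-homotopic in $\ceg(\Delta)$. Each such relation must be exhibited as a finite composite of $(m+2)$-gon face relations. I would argue by induction on the rank of $\Delta$. The base case is the rank-two diagram $\I$, where (up to homotopy) $\ceg(\I)$ is a single $(m+2)$-gon whose boundary directly gives $\underbrace{t_\oi t_\oj\cdots}_{m}=\underbrace{t_\oj t_\oi\cdots}_{m}$. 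For the inductive step one uses the decomposition of $\uCEG(\Delta)$ into $(m+2)$-gons, established earlier in the paper, to contract $\CEG(\Delta)$ onto a neighbourhood of $\Y$ --- where the faces realize $\wtq_\Y$ together with its weights --- and then imports the square/pentagon bookkeeping of \cite[Thm.~2.16]{KQ2} for the unweighted sub-configurations arising from the simply-laced $\Lambda$ of the weighted folding $\Fold$, propagating the relations along mutations by connectivity of $\CEG(\Delta)$. I expect the main obstacle to be precisely the interaction of several weighted edges meeting at one vertex of $\wtq_\Y$: one must check that the $(m+2)$-gon faces actually present in $\ceg(\Delta)$ suffice to generate every potential-term relation of $\W_\Y$, with no further hidden relation that would force $\Phi$ to collapse --- equivalently, that the induced homomorphism $\Br(\wtq_\Y,\W_\Y)\to\pi_1(\ceg(\Delta),\Y)$ is injective with image exactly $\CBr_\Delta(\Y)$.
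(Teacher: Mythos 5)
Your overall architecture (two mutually inverse assignments on generators, surjectivity for free, all content concentrated in the two well-definedness claims) matches the paper's frame, and your treatment of the direction $b_\oi\mapsto t_\oi$ for the length-$m$ braid relations --- exhibiting $\Br^m(t_\oi,t_\oj)$ as a consequence of the doubled $(m+2)$-gon relation at $\Y$ --- is exactly what the paper does at the initial weighted CTO (\Cref{lem:surj}). Note, however, that the paper does not verify the potential-term relations by inspecting faces not meeting $\Y$, as you propose; it first reduces to the initial CTO $\Y_\Delta$ (where $\W=0$) by combining the mutation-compatibility of the presentation (\Cref{thm:mu-present}) with the mutation-compatibility of local twists (\Cref{lem:conj}), which is both cleaner and avoids choosing conjugating paths to distant faces.

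The genuine gap is the converse direction, which you yourself flag as ``the main obstacle'' without resolving: the injectivity of $\Br(\wtq_\Y,\W_\Y)\to\CBr_\Delta(\Y)$, equivalently the well-definedness of $t_\oi\mapsto b_\oi$. Your plan to ``read off the defining relations of $\CBr_\Delta(\Y)$ from the faces'' presupposes that this subgroup of $\pi_1(\ceg(\Delta),\Y)$ is presented, on the generators $t_\oi$, by conjugated face relations alone; but $\CBr_\Delta(\Y)$ is defined as a subgroup, not by a presentation, and establishing that no further relations hold is precisely the content of the theorem. The Tietze rewriting from the edge/face presentation of $\pi_1$ to the local-twist generators introduces coherence relations (dependence on the chosen path to each face, consistency of the rewriting of each edge) that your sketch does not control, and ``induction on the rank'' plus ``importing the KQ2 bookkeeping'' supplies no mechanism for this. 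The paper avoids the issue entirely: it constructs a functor $f^*\colon\ceg(\Delta)\to\ceg(\Lambda)$ from the weighted folding --- well-defined because each $(m+2)$-gon relation, lifted to a fundamental domain of the 3-Calabi--Yau exchange graph, decomposes into squares and pentagons by \cite[Lem.~6.1]{QW} --- and then reads off injectivity from the commutative square $f^*\circ\Psi_\Delta=\Psi_\Lambda\circ\iota_f$, where $\Psi_\Lambda$ is the simply-laced isomorphism of \cite[Thm.~2.16]{KQ2} and $\iota_f\colon\Br\Delta\hookrightarrow\Br\Lambda$ is injective by Crisp's theorem (\Cref{thm:cri}). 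Without this external injectivity input, or a genuine substitute for it, your argument does not close.
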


The paper is organized as follows. In Section~2, we recall some basic notations and results on weighted graphs and Artin braid groups.
In Section~3, we introduce braid groups with finite presentations for weighted quivers with potential of types H and I and show that the presentation is compatible with the mutation.
In Section~4, we define the cluster exchange groupoid for a finite weighted quiver. We prove that
its fundamental group is isomorphic to the corresponding  Artin braid group.

\subsection*{Acknowledgment}
Zhe Han would like to thank Department of Mathematical Sciences of Tsinghua University for its support and hospitality during the visit in 2023. In the twin paper [QZx], the fusion-stable interpretation of cluster categories/exchange graphs is given. This work is inspired by the works of Duffield and Tumarkin.

Zhe Han is supported by the National Natural Science Foundation of China (No. 12001164). Yu Qiu is supported by the National Key R$\&$D Program of China (No.2020 YFA0713000) and National Natural Science Foundation of China (No.12031007 and No.12271279).

\section{Artin's braid groups associated to weighted graphs}

A \emph{weighted graph} (or \emph{Coxeter graph}) $\Delta$ is a graph $(\Delta_0,\Delta_1)$ with at most one edge $\epsilon_{ij}$ between any two vertices $i,j\in\Delta_0$ and a weight function
$\w:\Delta_1\to\mathbb{Z}_{\geq 2}$. For convenience, we use $\epsilon_{ij}=\emptyset$ to denote the empty edge with weight $\w(\epsilon_{ij})=2$. We will regard a usual graph as a weighted graph with trivial weights $\w(\epsilon_{ij})\equiv3$ for $\epsilon_{ij}\ne\emptyset$.

The map could be encoded in Coxeter matrix $M=(m_{ij}=\w(\epsilon_{ij}))$ satisfies $m_{ii}=2$ and $m_{ij}=m_{ji}$.

For each positive integer $k\geq 2$, there is a relation
\[\Br^k(a,b)\colon\;\underbrace{aba\cdots}_{k}=\underbrace{bab\cdots}_{k}.\]
Here, the composition is from left to right.
For simplicity, we write $\Co(a,b)=\Br^2(a,b)$ and $\Br(a,b)=\Br^3(a,b)$.
Throughout, we will use the conjugation notation
$a^b=b^{-1}ab$, and thus $\Br(a,b)$ is equivalent to $a^b=b^a$.

\begin{Def}
Let $\Delta$ be a weighted graph with $|\Delta_0|=n$. The \emph{(Artin) braid group} $\Br\Delta$ associated with $\Delta$ is a group with generators $b_1,\cdots,b_n$ and defined as the following
\[
    \Br\Delta=
        \< b_1, \ldots, b_n \mid \Br^{\w(\epsilon_{ij})}(b_i,b_j), i \ne j \>
\]

The \emph{Coxeter group} $W(\Delta)$ of $\Delta$ is defined to be the
quotient group of $\Br\Delta$ by the normal subgroup generated by $b^2_i, 1\le i\leq n$.
A weighted graph $\Delta$ is called \emph{finite} if its Coxeter group $W(\Delta)$ is a finite group.
\end{Def}
The list in \Cref{fig:CD} provides a complete and non-redundant enumeration of finite Coxeter graphs.
Note that types GHI can be simplified to $H_{3,4}$ and $I_2(m\ge5)$.
\begin{figure}[htpb]\centering
	\begin{gather*}\renewcommand{\arraystretch}{2}
		\begin{array}{llr}
			A_{n}\ (n\geq 1): \quad &
			\xymatrix{1 \ar@{-}[r]& 2 \ar@{-}[r]& \cdots \ar@{-}[r]&n-1\ar@{-}[r]& n } \\
			B_{n}=C_n\ (n\geq 2): \quad &
			\xymatrix{1 \ar@{-}[r]& 2 \ar@{-}[r]& \cdots\ar@{-}[r]&n-1 \ar@{-}[r]^{\quad 4}& n }         \\
			D_{n}\ (n\geq 4): \quad &
			\xymatrix@R=0.4pc{
		       1 \\
		       & 3 \ar@{-}[ul]\ar@{-}[dl] \ar@{-}[r]& 4 \ar@{-}[r]& \cdots \ar@{-}[r]&n-1\ar@{-}[r]& n\\
		      2\\} \\
			E_{6,7,8}: \quad &
			\xymatrix@R=1.5pc{ && 4 \\
				1 \ar@{-}[r]& 2 \ar@{-}[r]& 3 \ar@{-}[r]\ar@{-}[u]& 5 \ar@{-}[r]& 6
				\ar@{.}[r]& 7 \ar@{.}[r]& 8}    \vspace{1mm}\\
			F_4: \quad &
			\xymatrix@R=1.5pc{
				1 \ar@{-}[r]& 2 \ar@{-}[r]^4& 3 \ar@{-}[r]&4}\\
			G_2: \quad &
            \xymatrix{1\ar@{-}[r]^-{6}&2}\\
            H_{2,3,4}: \quad &\xymatrix@R=1.5pc{
				1 \ar@{-}[r]^5& 2 \ar@{.}[r]& 3\ar@{.}[r]&4}\\
			I_2(m)\ (m\geq 7): \quad & \xymatrix@R=1.5pc{
				1 \ar@{-}[r]^m& 2}
		\end{array}
	\end{gather*}
	\caption{The complete list of finite Coxeter graphs}
	\label{fig:CD}
\end{figure}

The non-simply laced weighted graphs can be obtained from the simply laced ones via the so-called \emph{weighted folding} operation defined as follows.
\begin{Def}\cite{Cr}\label{def:fold}
	Let $\Lambda$ and $\Delta$ be two weighted graphs with $\Lambda$ a simply laced graph. A simplicial map $\fold$ is called a \emph{weighted folding} if for each edge $\epsilon=\oi\overset{m}{-}\oj$ in $\Delta$ the restriction $f_\epsilon$ of $f$ to $f^{-1}(\epsilon)$ is of one of the following types.
\begin{itemize}
		\item $f_\epsilon$ is a \emph{k-fold trivial folding}, i.e., $f^{-1}(\epsilon)=\epsilon^{\bigsqcup k}, k\in\mathbb{Z}_{\ge 0}$ and $f_\epsilon$ maps each copy identically onto $\epsilon$;
		
		\item $f_\epsilon$ is a \emph{dihedral folding}, i.e., $m>3$ and $f^{-1}(\epsilon)$ is a bipartite finite Coxeter graph of Coxeter number $m$ such that $f^{-1}(\oi)$ and $f^{-1}(\oj)$ are the two parts of $f^{-1}(\epsilon)$;
		
		\item $f_\epsilon$ is a \emph{composite folding}, i.e., $f_\epsilon=f'_\epsilon g$ where $f'_\epsilon$ is a k-trivial folding onto $\epsilon$ and $g$ is a folding onto $\epsilon^{\coprod k}$ which restricts to trivial and dihedral foldings.
	\end{itemize}

    A weighted folding is called \emph{finite} if $\Delta$ is a finite weighted graph.
\end{Def}

\begin{remark}\label{rem:fld} Given a weighted graph $\Delta$,
there is a general construction of its folding $f\colon \Lambda\to \Delta$ in \cite[\S~6]{CP}.

We remark that an unfolded graph $\Lambda$ of a weighted graph is possibly non-connected. However, each connected component of the unfolded graph is also an unfolding of the weighted graph. Indeed, let $\fold$ be a weighted folding and $\Lambda$ be a disjoint union $\Lambda_1\cup\cdots\cup\Lambda_s$  of connected components $\Lambda_i,1\leq i\leq s$. Then for each $\epsilon\in\Delta$, $f^{-1}(\epsilon)=\coprod^s_{i=1}f^{-1}_i(\epsilon)$ is a disjoint union of finite Coxeter graphs, where $f_i$ is the restriction of $f$ on $\Lambda_i$. Since $f$ is a folding, we have that $f_i:f^{-1}_i(\epsilon)\to\epsilon$ is of one of the three types in Definition~\ref{def:fold}. So the claim follows.
In the rest of this paper, we always assume unfolded graphs $\Lambda$ are connected without mentioning them.
\end{remark}

For a weighted folding $f\colon \Lambda\to \Delta$, it induces a group homomorphism between corresponding braid groups
\begin{equation}\label{eq:cri}
    \begin{array}{rcl}
        \iota_f:\Br\Delta & \longrightarrow & \Br\Lambda\\
        b_\oi & \mapsto & \prod_{j\in f^{-1}(\oi)}b_j,
    \end{array}
\end{equation}
where $b_\oi$ (resp. $b_j,j\in f^{-1}(\oi)$) is the generator of $\Br\Delta$ (resp. $\Br\Lambda$) given by vertex $\oi\in\Delta_0$ (resp. $j\in\Lambda_0$).
It is conjectured that the map $\iota_f$ is injective \cite{CP} and one has the following.

\begin{thm}[{\cite[Prop.~4.3 and Thm.~1.4]{Cr}}]\label{thm:cri}
Let $\fold$ be a finite weighted folding. Then $\iota_f$ in ~\eqref{eq:cri} is injective.
\end{thm}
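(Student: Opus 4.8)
The plan is to lift the problem to Artin \emph{monoids} and then exploit their Garside structure together with one rank-two computation. First I would observe that, since all the defining relations of $\Br\Delta$ and $\Br\Lambda$ are positive words, $\iota_f$ restricts to a homomorphism of Artin monoids $\iota_f^+\colon\Br^+\Delta\to\Br^+\Lambda$ (same presentations, without inverses). I would take for granted the classical Coxeter-group analogue of the statement — that the folded generators $s_\oi\mapsto\prod_{j\in f^{-1}(\oi)}s_j$ identify $W(\Delta)$ with a reflection subgroup of $W(\Lambda)$ — since it underlies everything that follows. Because finite-type Artin monoids embed into their Artin groups (Paris), it then suffices to prove that $\iota_f^+$ is injective; and by \Cref{rem:fld} we may assume $\Lambda$ connected.

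Next I would set up the Garside picture. For a finite Coxeter graph $\Gamma$, the monoid $\Br^+\Gamma$ is a Garside monoid: its simple elements are the positive lifts $\widetilde w$ of the $w\in W(\Gamma)$ (well defined, since all reduced words for $w$ yield the same positive element); left and right lcm's of simples exist and mirror joins in the weak order on $W(\Gamma)$; and parabolic submonoids over full subgraphs are again Artin monoids (van der Lek, Paris). The key structural observation is that each fibre $f^{-1}(\oi)\subseteq\Lambda_0$ spans an \emph{edgeless} full subgraph — for a dihedral fibre because it is one part of a bipartite graph, for a trivial fibre because the copies are pairwise non-adjacent — so $W_{f^{-1}(\oi)}\cong(\mathbb Z/2)^{|f^{-1}(\oi)|}$, and the folded generator $B_\oi:=\iota_f^+(b_\oi)=\prod_{j\in f^{-1}(\oi)}b_j$ is precisely the positive lift of the longest element of this parabolic; in particular $B_\oi$ is a square-free simple element of $\Br^+\Lambda$.

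The heart of the argument is the rank-two computation. I would fix an edge $\epsilon=\oi\overset{m}{-}\oj$ of $\Delta$ and consider the full subgraph $\Gamma_\epsilon$ of $\Lambda$ on $f^{-1}(\oi)\cup f^{-1}(\oj)$; by the folding axioms this is a bipartite finite Coxeter graph of Coxeter number $m$ whose two parts are the fibres (for $m\le 3$ it is a disjoint union of edges, possibly empty, handled below). In a bipartite finite Coxeter system the two bipartite involutions $\sigma_\oi:=\prod_{j\in f^{-1}(\oi)}s_j$ and $\sigma_\oj:=\prod_{j\in f^{-1}(\oj)}s_j$ form a Coxeter pair: $\sigma_\oi\sigma_\oj$ is a Coxeter element, of order $m$, so they generate a dihedral parabolic $\cong W(I_2(m))$ and $\underbrace{\sigma_\oi\sigma_\oj\cdots}_{m}=\underbrace{\sigma_\oj\sigma_\oi\cdots}_{m}=w_0(\Gamma_\epsilon)$. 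A direct length count confirms that both alternating words, spelled in the generators $s_j$, are reduced; lifting to $\Br^+\Lambda$ gives $\underbrace{B_\oi B_\oj\cdots}_{m}=\widetilde{w_0(\Gamma_\epsilon)}=\underbrace{B_\oj B_\oi\cdots}_{m}$, and this common value is exactly $\operatorname{lcm}(B_\oi,B_\oj)$, attained after precisely $m$ alternating multiplications. Thus $\Br^m(B_\oi,B_\oj)$ holds with no shorter coincidence — i.e.\ $\iota_f^+$ is an \emph{LCM-homomorphism} in the sense of Crisp.

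Finally, knowing that $\iota_f^+$ sends the standard generators to square-free simples whose pairwise lcm's realize the rank-two parabolics $\widetilde{w_0(\Gamma_\epsilon)}$ on the nose, I would invoke the divisibility/normal-form machinery for Garside monoids: an LCM-homomorphism reflects the left-greedy normal form factor by factor and is therefore injective, whence $\iota_f$ is injective. The step I expect to be the main obstacle is making the LCM-homomorphism verification \emph{global} rather than merely pairwise: one must control the fibres over vertices lying on several edges of $\Delta$, treat composite foldings $f_\epsilon=f'_\epsilon\circ g$, and check that the intermediate weighted graphs occurring in such a factorization stay finite. I would handle this by induction on the number of foldings, using the edgewise description in \Cref{def:fold} to peel off one dihedral folding or one $k$-fold trivial folding at a time — in the trivial case $B_\oi,B_\oj$ land inside a product of commuting copies of $\Br^+(A_2)$ and injectivity is just the diagonal embedding $\Br^+(A_2)\hookrightarrow\Br^+(A_2)^{\times k}$ — but the bookkeeping of how these local pictures glue along shared fibres is where the real work lies. (A geometric alternative would realize both sides as fundamental groups of regular-orbit spaces of compatible linear group actions and establish $\pi_1$-injectivity directly, but the Garside route seems cleaner for the dihedral pieces.)
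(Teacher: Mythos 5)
The paper offers no proof of this statement---it is quoted directly from Crisp \cite[Prop.~4.3 and Thm.~1.4]{Cr}---and your sketch (passing to the positive Artin monoids, identifying each folded generator $B_\oi$ with the square-free positive lift of the longest element of the edgeless parabolic on $f^{-1}(\oi)$, verifying the rank-two condition $\underbrace{B_\oi B_\oj\cdots}_{m}=\widetilde{w_0(\Gamma_\epsilon)}=\operatorname{lcm}(B_\oi,B_\oj)$ via the bipartite Coxeter element of $f^{-1}(\epsilon)$, and then invoking injectivity of LCM-homomorphisms into finite-type Artin monoids) is precisely Crisp's argument. The one place where you overestimate the remaining work is the ``globalization'' step: Crisp's injectivity theorem takes as hypothesis exactly the pairwise lcm conditions you verify (together with nontriviality of the images of the generators), so no further induction on composite foldings or bookkeeping over shared fibres is required.
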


\begin{Ex}[Finite weighted foldings]\label{ex:fold}
Let $\Delta$ be a finite weighted graph. For each edge $\epsilon \in\Delta_1$ of weight $m$, there are only finitely many simply laced graphs of Coxeter number $m$. So there are only finitely many unfoldings of $\Delta$.

Finite weighted foldings of Dynkin diagrams are well-known.
\begin{equation*}
    \begin{array}{ccccc}
        A_{2n-1}: & \begin{tikzcd}[row sep=small,column sep=small]
           1^+\ar[r,no head]&\cdots\ar[r,no head]&(n-1)^+\ar[dr,no head] &\\
			&&&n\\
			1^-\ar[r,no head]&\cdots\ar[r,no head]&(n-1)^-\ar[ur,no head] &
        \end{tikzcd}
         & \overset{f}{\longrightarrow} & C_n: & \begin{tikzcd}[row sep=.2em,column sep=small] \1\ar[r,no head]&\cdots\ar[r,no head]&\mathbf{n-1}\ar[r, no head,"4"]&\mathbf{n}
         \end{tikzcd}\\
         D_{n+1}:&\begin{tikzcd}[row sep=small,column sep=small]
             &&&n^+\\
             1\ar[r,no head] &\cdots\ar[r,no head]&n-1\ar[ur,no head]\ar[dr,no head]&\\
            	& & & n^-
         \end{tikzcd}& \overset{f}{\longrightarrow} &B_n: &\begin{tikzcd}[row sep=small,column sep=small]
            	\1\ar[no head,r]&\cdots\ar[r,no head]&\mathbf{n-1}\ar[r,no head,"4"]&\mathbf{n}
        \end{tikzcd}\\
        E_6:&\begin{tikzcd}[row sep=small,column sep=small]
            4^+\ar[no head, r]& 3^+\ar[no head,dr] & & \\
			&&2\ar[no head,r]&1\\
			4^-\ar[no head,r]&3^-\ar[no head,ur]&&
        \end{tikzcd}& \overset{f}{\longrightarrow} &F_4:&\begin{tikzcd}[row sep=small,column sep=small]
			\mathbf{4}\ar[no head,r]&\mathbf{3}\ar[no head,r,"4"]&\mathbf{2} \ar[no head,r]&\mathbf{1}
		\end{tikzcd}\\
        D_4:&\begin{tikzcd}[row sep=small,column sep=small]
            &&1^+\\
			2\ar[no head, urr]\ar[no head,drr]\ar[no head,rr]&&1^\circ\\
			&&1^-
        \end{tikzcd}& \overset{f}{\longrightarrow} &G_2:&\begin{tikzcd}[row sep=small,column sep=small]
            \2\ar[r,no head,"6"]&\1
        \end{tikzcd}
    \end{array}
\end{equation*}

Finite weighted foldings $\fold$ of non-crystallographic Coxeter diagrams $\Delta$ are listed in the following.
\begin{itemize}
    \item The graph $\Lambda$ is of type $E_8$ and $\Delta$ is of type $H_4$ while \[f(i^\pm)=\oi, 1\leq i\leq 4,\text{ and } f(\epsilon_{st})=\epsilon_{f(s)f(t)}, s,t\in(E_8)_0.\]
	Note that the restriction of $f$ onto the subgraph $\{i^\pm|1\leq i\leq 3\}$ gives a folding from $D_6$ to $H_3$.
    \begin{figure}[htpb]\centering
		\begin{tikzcd}[row sep=.2em,column sep=scriptsize]
			\qquad\qquad4^+\ar[no head,r]& 3^+\ar[no head,r] & 2^+\ar[no head,r] & 1^-&&&&&\\
			\quad E_8:\qquad&&&&\xlongrightarrow{f}  &H_4:\;\;\mathbf{4}\ar[no head,r]&\mathbf{3}
                \ar[no head,r]&\mathbf{2} \ar[no head,r,"5"]&\mathbf{1}\\
			\qquad\qquad4^-\ar[no head,r]&3^-\ar[no head,r]&2^-\ar[no head,r]\ar[no head,uur]& 1^+&&&&&		
		\end{tikzcd}
	\end{figure}

    \item The graph  $\Lambda$ is simply laced Dynkin of Coxeter number $m$ and $\Delta$ is of type $\I$ while $f$ maps bullets in the first (resp. second) row of $\Lambda$ to $\1$ (resp. $\2$) and all edges to the unique edge in $\I$
    \begin{figure}[htpb]\centering
	\begin{tikzcd}[row sep=.05em, column sep=1pc]
		&& \circ & &\cdots& & \circ& &&&&\1\\
		A_{m-1}\colon&&&&&& &&\overset{f}{\longrightarrow} & I_2(m):&\\
		&\circ\ar[no head,uur]&&\circ\ar[no head,uur]\ar[no head,uul] && \circ
            \ar[no head,uul]\ar[no head,uur]&&\circ \ar[no head,uul]&&&&\2\ar[no head,uu,"m"]
    \end{tikzcd}\
	\begin{tikzcd}[row sep=.05em, column sep=1pc]
		&& \circ & &\cdots& & \circ& &&&&\1\\
		D_{\frac{m}{2}+1}\colon&&&&&& &&\overset{f}{\longrightarrow} & I_2(m):&\\
		&\circ\ar[no head,uur]&&\circ\ar[no head,uur]\ar[no head,uul] && \circ
            \ar[no head,uul]\ar[no head,uur]& \circ\ar[no head,uu] &\circ \ar[no head,uul]&&&&\2\ar[no head,uu,"m"]
    \end{tikzcd}
	\begin{tikzcd}[row sep=.05em, column sep=1pc]
		&& 1 & 3 &5& & 7& &&&&\1\\
		E_{6,7,8}\colon&&&&&& &&\overset{f}{\longrightarrow} & I_2(m):&\\
		&\;\;\;2\ar[no head,uur]&&4\ar[no head,uu]\ar[no head,uur]\ar[no head,uul] && 6
            \ar[no head,uul]\ar[no head,uur]&&8 \ar[no head,uul]&&&&\2\ar[no head,uu,"m"]
    \end{tikzcd}
    \end{figure}
\end{itemize}
\end{Ex}

\section{Presentation of a weighted quiver with potential}

An ordinary \emph{quiver} $Q$ is a quadruple $(Q_0,Q_1,s,t)$,
where $Q_0$ denotes the set of vertices, $Q_1$ denotes the set of arrows, and $s$ and $t$ are two maps that associate an arrow with its start and end points, respectively. A \emph{potential} $W$ of $Q$ is a linear combination (possibly infinite) of cyclic paths of $Q$, with the property that no two different cycles in $W$ with non-zero coefficient can be obtained from each other by rotations. The pair $(Q,W)$ is called a \emph{quiver with potential} (QP for short).

Mutation of QP is developed by Derksen-Weyman-Zelevinsky in \cite{DWZ}, which is an involution on a QP $(Q,W)$ and results in a new quiver with QP $(Q',W')$. We follow the formulations in \cite{DWZ}.

A \emph{weighted quiver} is an oriented weighted graph. A weighted quiver with potential (weighted QP for short) is a pair $(\wtq,\W)$ where $\wtq$ is a weighted quiver and $\W$ is a linear combination of cycles in $\wtq$, with the property that no two different cycles in $W$ with non-zero coefficient can be obtained from each other by rotations.

A map between $(Q,W)$ and $(\wtq,\W)$ is called a \emph{folding} if it is a folding between the underlying graphs that is compatible with the orientation, and sends $W$ to $\W$.

In the rest of this paper, when we have a finite weighted folding $\fold$ in \Cref{ex:fold}, we fix an orientation $\overrightarrow{\Lambda}$ (resp. $\overrightarrow{\Delta}$) of the finite graph $\Lambda$ (resp. $\Delta$), and
\begin{equation}\label{eq:f}
    f\colon \overrightarrow{\Lambda}\longrightarrow \overrightarrow{\Delta}
\end{equation}
represents the folding $(\overrightarrow{\Lambda},0)\to (\overrightarrow{\Delta},0)$.

For any $\oi\in\wtq_0$, we call $\mu_\oi=\prod_{i\in f^{-1}(\oi)}\mu_i$ a \emph{weighted mutation} with respect to $f$. We denote
\[\mu_{\oi_1,\cdots,\oi_n}:=\mu_{\oi_1}\ldots\mu_{\oi_n}\] for the sequence of repeated weighted mutations.
\begin{Def}\label{def:mu}
    For any mutation sequence $\mu_{\oi_1,\cdots,\oi_n}$, define
    \[\mu_{\oi_1,\cdots,\oi_n}(\overrightarrow{\Delta})=f\circ\mu_{\oi_1,\cdots,\oi_n}(\overrightarrow{\Lambda})\]
    called the \emph{mutation} of the weighted quiver $\overrightarrow{\Delta}$.
\end{Def}

\begin{lem}\label{lem:mu-well}
    The mutation of a weighted quiver is well-defined.
\end{lem}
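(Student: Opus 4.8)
The plan is to show that the definition $\mu_{\oi_1,\cdots,\oi_n}(\overrightarrow{\Delta})=f\circ\mu_{\oi_1,\cdots,\oi_n}(\overrightarrow{\Lambda})$ does not depend on the choices that went into it, namely (a) the choice of finite weighted folding $f\colon\overrightarrow{\Lambda}\to\overrightarrow{\Delta}$ (cf. \Cref{ex:fold}, where for a fixed edge-weight there may be several simply laced unfoldings), and (b) the fact that $f\circ\mu_{\oi_1,\cdots,\oi_n}(\overrightarrow{\Lambda})$ is \emph{a priori} only a quiver on the vertex set $\Lambda_0$ together with the surjection to $\Delta_0$ induced by $f$ on vertices, so one must check that this data actually descends to a well-defined weighted quiver on $\Delta_0$ that is again foldable. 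First I would make precise what ``$f\circ\mu_{\oi_1,\cdots,\oi_n}(\overrightarrow{\Lambda})$'' means: the underlying vertex set of $\overrightarrow{\Delta}$ is $\Delta_0=\Lambda_0/\!\!\sim$ where $\sim$ identifies the fibres of $f$, this identification is preserved by weighted mutation $\mu_{\oi}=\prod_{i\in f^{-1}(\oi)}\mu_i$ because a weighted mutation mutates a whole fibre at once, and the weight of the resulting edge between two fibres is read off from the number (or pattern) of arrows between the corresponding vertices of $\mu_{\oi_1,\cdots,\oi_n}(\overrightarrow{\Lambda})$. The key input is that weighted mutation, by construction, sends a foldable quiver to a foldable quiver: the class of simply laced quivers admitting a folding to a weighted quiver is closed under the composite mutations $\mu_{\oi}$, so each $\mu_{\oi_1,\cdots,\oi_n}(\overrightarrow{\Lambda})$ is again the source of a weighted folding onto some weighted quiver, and that quiver is what we call $\mu_{\oi_1,\cdots,\oi_n}(\overrightarrow{\Delta})$.

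For part (b) the argument is essentially local: it suffices to treat a single weighted mutation $\mu_{\oi}$ and then iterate. Fixing $\oi\in\wtq_0$, I would examine the subquiver of $\overrightarrow{\Lambda}$ on $f^{-1}(\oi)$ and its neighbours, and verify — case by case over the three types of edge-restrictions in \Cref{def:fold} (trivial, dihedral, composite folding) — that after applying the commuting mutations $\mu_i$, $i\in f^{-1}(\oi)$, the arrows between any two fibres $f^{-1}(\oj)$ and $f^{-1}(\ok)$ still have the uniform shape required for the resulting graph to be the source of a folding. Since $\Delta$ is finite, \Cref{ex:fold} enumerates all the relevant local pictures, and DWZ mutation in each of them is a finite check (this is exactly the ``weighted mutation of matrices with real entries'' of \cite{DT,DT2,Cr}, reinterpreted at the level of quivers). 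Here the hypothesis that $f$ is a \emph{finite} weighted folding is used to keep the list of cases finite and to invoke the compatibility of mutation with folding established in \cite{Cr}.

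For part (a), independence of the choice of $f$, I would argue as follows. Suppose $f\colon\overrightarrow{\Lambda}\to\overrightarrow{\Delta}$ and $f'\colon\overrightarrow{\Lambda}'\to\overrightarrow{\Delta}$ are two finite weighted foldings of the same target. Both $f\circ\mu_{\oi_1,\cdots,\oi_n}(\overrightarrow{\Lambda})$ and $f'\circ\mu_{\oi_1,\cdots,\oi_n}(\overrightarrow{\Lambda}')$ are weighted quivers on the vertex set $\Delta_0$, and I claim they coincide; since the underlying vertex set and the labelling are the same, the point is to match the weights edge by edge. The weight of the edge between $\oj$ and $\ok$ in a weighted quiver is determined by the $2\times 2$ ``exchange-type'' data between those two vertices, equivalently by the Coxeter number of the rank-$2$ restriction; and the $b$-matrix (with real entries) attached to $\overrightarrow{\Delta}$ via any unfolding is independent of the unfolding — this is precisely the content of the weighted-folding formalism of \cite{Cr} (and is what makes the matrix-mutation of \cite{DT,DT2} well-defined in types $H$ and $I$). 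So I would reduce the statement to: the real $b$-matrix $B(\mu_{\oi_1,\cdots,\oi_n}(\overrightarrow{\Delta}))$ computed through $f$ equals the one computed through $f'$. This follows because weighted mutation on the $\Lambda$-side projects to ordinary matrix mutation of the real $b$-matrix on the $\Delta$-side (a compatibility already implicit in \Cref{def:mu}), matrix mutation is deterministic, and the starting matrices agree. I expect this last point — cleanly identifying ``$f\circ\mu_\bullet(\overrightarrow\Lambda)$'' with matrix mutation of the real $b$-matrix and checking it is fibre-independent — to be the main obstacle, since it is where one must be careful that the combinatorial object ``weighted quiver'' really is captured by the real $b$-matrix and that no information is lost in the two directions of the folding; everything else is a finite case analysis driven by \Cref{ex:fold} together with \Cref{thm:cri} and the results of \cite{Cr}.
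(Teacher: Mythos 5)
Your proposal is correct in outline and its core — reducing well-definedness to the statement that weighted mutation preserves foldability, verified by a finite case analysis over the local pictures of \Cref{ex:fold} and \Cref{def:fold}, with the real $b$-matrix formalism of \cite{DT,DT2,Cr} as backup — is exactly the ``direct check'' that the paper invokes for types $H$ and $I$. The paper's actual proof is shorter and splits into two cases you do not separate: for the crystallographic types $B_n$, $C_n$, $F_4$, $G_2$ it observes that the folding arises from a group action on $\overrightarrow{\Lambda}$ by quiver automorphisms, so equivariance of DWZ mutation immediately gives well-definedness (citing \cite{CQ}); only for $H$ and $I$, where no such automorphism exists, does it fall back on the direct check, the computations of \cite{DT,DT2}, or the fusion-action interpretation of \cite{QZx}. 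Your uniform treatment buys generality at the cost of missing this shortcut. One further remark: your part (a), independence of the choice of unfolding $f$, is not actually required for \Cref{def:mu} to make sense, since the paper fixes $f$ once and for all before defining mutation; that independence is a separate (and genuinely nontrivial) statement which the paper defers to \Cref{prop:ceg} and settles by citing \cite{QZx} rather than by the $b$-matrix argument you sketch. Your $b$-matrix reduction for (a) is plausible but is the least developed step of your write-up, and it is precisely the point the paper chose to outsource.
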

\begin{proof}
    For $\Delta$ of types $B_n, C_n, F_4$ and $G_2$, it is obtained from $\Lambda$ by a group action, which is compatible with mutations, see \cite{CQ} for the categorical constructions. For $\Delta$ of types $H$ and $I$, the assertion follows by a direct check, or \cite{DT,DT2},
    which also alternatively follows from a categorical interpretation via fusion actions,
    cf. \cite{QZx}.
\end{proof}

Two weighted QPs are said to be \emph{mutation-equivalent} if a sequence of mutations relates them. A weighted QP is called \emph{finite} if it is mutation-equivalent to $\overrightarrow{\Delta}$ with $\Delta$ a finite weighted graph.
\begin{Def}\label{def:braid group}
	Let $(\wtq,\W)$ be a finite weighted QP. We define the associated braid group $\Br(\wtq,\W)$ by the following presentation.
    \begin{itemize}
        \item Generators: vertices of $\wtq$.
        \item Relations $\Br^m(b_\oi,b_\oj)$ if there is exactly one arrow between $\oi$ and $\oj$
        of weight $m$;
        \item Relations from potential terms:
        \begin{enumerate}[label=\Roman*.]
            \item $\Co(b_\2,b_\1^{b_{\ol} b_{\ol-1} \cdots b_\3})$ if there is a $l$-cycle, $l\geq3$, between $\1,\2,\cdots,\ol$ which contributes a term in $\W$ and has all arrows of weight 3, see Figure~\ref{fig:cycle-present}~I;
            \item $\Co(b_\1,b^{b_\2}_\3)$ if there is a 3-cycle between $\1,\2$ and $\3$ which contributes a term in $\W$ such that the two arrows incident to $\1$ are of weights $m$ for $m\in\{4,5\}$ and the arrow opposite to $\1$ is of weight 3, see Figure~\ref{fig:cycle-present}~II;
            \item $\Co(b_\3,b^{b_\1b_\4}_\2)$ if there is a 4-cycle between $\1,\2,\3$ and $\4$ which contributes a term in $\W$ such that the two arrows end at $1$ and $3$ are of weight $m$ for $m\in\{4,5\}$ while the two arrows start at $1$ and $3$ are of weight $3$, see Figure~\ref{fig:cycle-present}~III;
            \item $\Co(b_\1,b^{b_\3b_\2}_\2)$ and $\Br(b_\1,b^{b_\2}_\3)$ if there is a 3-cycle between $\1,\2$ and $\3$ which contributes a term in $\W$ and has all arrows of weight 5, see Figure~\ref{fig:cycle-present}~IV.
        \end{enumerate}
    \end{itemize}
\end{Def}
\begin{figure}\centering
\begin{tikzpicture}
    \draw(0,2)node{\begin{tikzcd}[row sep=3.4em,column sep=2em]
        \1\ar[rr,"3"]&&\2\ar[d,"3"]& &\1\ar[dr,"m"]& & \2\ar[rr,"m"]&&\3\ar[d,"3"] & &\1\ar[dr,"5"]&\\
        \ol\ar[r,no head]\ar[u,"3"]&\cdots &\3\ar[l,no head] & \3\ar[ur,"m"]&&\2\ar[ll,"3"] & \1\ar[u,"3"]&&\4\ar[ll,"m"] & \3\ar[ur,"5"]&&\2\ar[ll,"5"]
    \end{tikzcd}};
\draw(-5.6,0)node{\text{I}}(-1.6,0)node{\text{II}}(2.1,0)node{\text{III}}(5.8,0)node{\text{IV}};
\end{tikzpicture}
\caption{Type of potential terms}
\label{fig:cycle-present}
\end{figure}
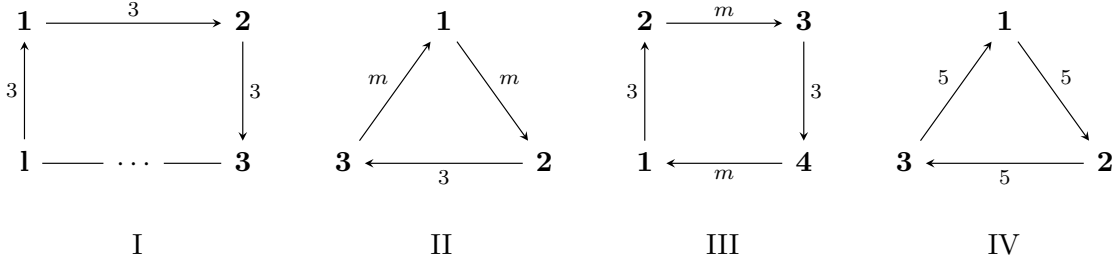

\begin{remark}
    Note that $\Br(\wtq,0)$ is exactly $\Br\wtq$ for any finite weighted quiver $\wtq$. Moreover, when $(\wtq,\W)$ mutation-equivalent to a Dynkin quiver, our definition coincides with \cite[Def.~10.1]{Q16}, \cite[Def.~2.2]{GM} and \cite[Def.~4.5]{HHLP}.

    We remark  that relations from potential terms have equivalent descriptions as the following
    (cf. Figure~\ref{fig:cycle-present}).
    \begin{itemize}
        \item For type I, we have the following equivalences of relations
        \[\begin{tikzcd}[row sep=small,column sep=huge]
            \Co(b_\1,b^{b_{\ol-\1}\cdots b_\2}_\ol)\ar[Leftrightarrow]{r}{(-)^{\iv{b_\2}}}& \Co(b^{\iv{b_\2}}_\1,b^{b_{\ol-\1}\cdots b_\3}_\ol)\ar[Leftrightarrow]{r}{\Br(b_\1,b_\2)}& \Co(b^{b_\1}_\2,b^{b_{\ol-\1}\cdots b_\3}_\ol)\\
             \ar[Leftrightarrow,shorten=5.3mm,xshift=3.5ex]{r}{\iv{b_\1}} & \Co(b_\2,b^{b_{\ol-\1}\cdots b_\3\iv{b_\1}}_\ol) \ar[Leftrightarrow]{r}{\Co(b_\3,b_\1)} & \Co(b_\2,b^{b_{\ol-\1}\cdots \iv{b_\1}b_\3}_\ol)\\
             \ar[Leftrightarrow,shorten=11mm,xshift=0ex]{r}{\Co(b_{\4},b_\1)} & \ldots           \ar[Leftrightarrow,shorten=5.8mm,xshift=3.5ex]{r}{\Co(b_{\ol-\1},b_\1)} & \Co(b_\2,b^{\iv{b_\1}b_{\ol-\1}\cdots b_\3}_\ol)\\
             \ar[Leftrightarrow,shorten=5.8mm,xshift=3.5ex]{r}{\Br(b_\1,b_\ol)} & \Co(b_\2,b^{b_\ol b_{\ol-\1}\cdots b_\3}_\1)
        \end{tikzcd}\]
        which are also equivalent to any
        $\Co(b_{\oi+1},b_{\oi}^{b_{\oi-1}\cdots b_{\oi+1}})$ (with the convention $\oi=\ol+\oi$ here), see also \cite[Lem.~10.2]{Q16} and \cite[Lem.~5.4]{QZy};

        \item For type II, we have that
        \[\begin{tikzcd}[row sep=small, column sep=huge]
            \Co(b_\1,b^{b_\2}_\3)\ar[Leftrightarrow]{r}{\Br(b_\2,b_\3)} &  \Co(b_\1,b^{\iv{b_\3}}_\2)\ar[Leftrightarrow]{r}{(-)^{b_\3}}& \Co(b_\2,b^{b_\3}_\1).
        \end{tikzcd}\]

        \item For type III, we have that
        \[\begin{tikzcd}[row sep=small,column sep=huge]
            \Co(b_\1,b^{b_\3b_\2}_\4)\ar[Leftrightarrow]{r}{\Br(b_\4,b_\3)} & \Co(b_\1,b^{\iv{b_\4}b_\2}_\3)\ar[Leftrightarrow]{r}{(-)^{\iv{b_\2}b_\4}} & \Co(b^{\iv{b_\2}b_\4}_\1,b_\3)\\
            \ar[Leftrightarrow,shorten=5mm,xshift=3.2ex]{r}{\Br(b_\1,b_\2)} & \Co(b^{b_\1b_\4}_\2,b_\3).
        \end{tikzcd}\]

        \item For type IV, we have
        \[\begin{tikzcd}[row sep=small,column sep=huge]
              \Co(b_\1,b^{b_\3b_\2}_\2)\ar[Leftrightarrow]{r}{\Br^5(b_\2,b_\3)} &  \Co(b_\1,b^{\iv{b_\2}\iv{b_\3}}_\3)\ar[Leftrightarrow]{r}{(-)^{b_\3b_\2}} & \Co(b^{b_\3b_\2}_\1,b_\3)\\
              \ar[Leftrightarrow,shorten=4.5mm,xshift=3.5ex]{r}{\Br(b_\2,b^{b_\3}_\1)} &  \Co(b^{\iv{b_\3}\iv{b_\1}b_\3}_\2,b_\3)\ar[Leftrightarrow]{r}{(-)^{\iv{b_\3}b_\1b_\3}} & \Co(b_\2, b^{b_\1b_\3}_\3).
        \end{tikzcd}\]
        On the other hand, we have
        \[\begin{tikzcd}[row sep=small,column sep=huge]
            \Co(b_\1,b^{b_\3b_\2}_\2)\ar[Leftrightarrow]{r}{\Br^5(b_\2,b_\3)} &  \Co(b_\1,b^{\iv{b_\2}\iv{b_\3}}_\3)\ar[Leftrightarrow]{r}{(-)^{b_\3b_\2}} & \Co(b^{b_\3b_\2}_\1,b_\3).
        \end{tikzcd}\]
        So
        \[\begin{tikzcd}[row sep=small,column sep=huge]
            \Br(b_\1,b^{b_\2}_\3)\ar[Leftrightarrow]{r}{\Br^5(b_\2,b_\3)} & \Br(b_\1,b^{\iv{b_\3}}_\2)\ar[Leftrightarrow]{r}{(-)^{b_\3}} & \Br(b^{b_\3}_\1,b^{\iv{b_\3}\iv{b_\2}}_\2)\\
            \ar[Leftrightarrow,shorten=4mm,xshift=3.2ex]{r}{(-)^{b_\2b_\3}} & \Br(b^{b_\3b_\2b_\3}_\1,b_\2)\ar[Leftrightarrow]{r}{\Co(b^{b_\3b_\2}_\1,b_\3)} & \Br(b^{b_\3b_\2}_\1,b_\2) \\
            \ar[Leftrightarrow,shorten=4.8mm,xshift=3ex]{r}{(-)^{\iv{b_\2}}} & \Br(b^{b_\3}_\1,b_\2).
        \end{tikzcd}\]
        A similar discussion shows that $\Co(b_\2,b^{b_\1b_\3}_\3)$ and $\Br(b_\2,b^{b_\3}_\1)$ are equivalent to $\Co(b_\3,b^{b_\2b_\1}_\1)$ and $\Br(b_\3,b^{b_\1}_\2)$.
    \end{itemize}
\end{remark}

Let $(\wtq,\W)$ be a finite weighted QP. For each $\oi\in\wtq_0$, denote by $\wtq'=\mu_\oi(\wtq)$ and by $b'_\oi,\oi\in\wtq_0$ the generators of $B_{\wtq'}$. Let
\begin{align*}
    \theta^\flat_\oi:\Br(\wtq,\W)\longrightarrow\Br(\wtq',\W')\\
    \theta^\sharp_\oi:\Br(\wtq',\W')\longrightarrow\Br(\wtq,\W)
\end{align*}
satisfying
\begin{align}\label{eq:flat}
    \theta^\flat_\oi(b_\oj)=\begin{cases}
	(b'_\oj)^{b'_\oi} &\text{if there is an arrow from $\oj$ to $\oi$ in $\wtq$,}\\
	b'_\oj & \text{otherwise;}
\end{cases}
\end{align}
\begin{align}\label{eq:sharp}
    \theta^\sharp_\oi(b'_\oj)=\begin{cases}
	(b_\oj)^{\iv{b_\oi}} &\text{if there is an arrow from $\oi$ to $\oj$ in $\wtq'$,}\\
	b_\oj & \text{otherwise.}
\end{cases}
\end{align}

\begin{thm}\label{thm:mu-present}
	For any finite weighted QP $(\wtq,\W)$, $\theta^\sharp_\oi$ and $\theta^\flat_\oi,\oi\in\wtq_0$ are mutually inverse group isomorphisms.
\end{thm}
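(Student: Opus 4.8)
The claim breaks into two parts: (i) the assignments \eqref{eq:flat} and \eqref{eq:sharp} extend to group homomorphisms, and (ii) the two composites $\theta^\sharp_\oi\circ\theta^\flat_\oi$ and $\theta^\flat_\oi\circ\theta^\sharp_\oi$ are the identities. I would dispatch (ii) first. The weighted mutation $\mu_\oi=\prod_{i\in f^{-1}(\oi)}\mu_i$ is a composite of pairwise commuting involutions (the vertices of $f^{-1}(\oi)$ are pairwise non-adjacent in $\Lambda$), hence an involution, so $\mu_\oi(\wtq')=\wtq$; moreover the underlying mutation reverses every arrow at $\oi$ and creates no new arrow incident to $\oi$, so that an arrow $\oj\to\oi$ in $\wtq$ corresponds to an arrow $\oi\to\oj$ in $\wtq'$. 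Consequently, if $\oj\to\oi$ in $\wtq$ then $\theta^\sharp_\oi\theta^\flat_\oi(b_\oj)=\theta^\sharp_\oi\big((b'_\oj)^{b'_\oi}\big)=\big((b_\oj)^{\iv{b_\oi}}\big)^{b_\oi}=b_\oj$, using $\theta^\sharp_\oi(b'_\oi)=b_\oi$ and $\theta^\sharp_\oi(b'_\oj)=(b_\oj)^{\iv{b_\oi}}$, while for every other $\oj$ both maps fix $b_\oj$; the composite $\theta^\flat_\oi\circ\theta^\sharp_\oi$ is handled identically after swapping the roles of $\wtq$ and $\wtq'$. Hence, once (i) is known, both composites are homomorphisms that restrict to the identity on generators, so they are the identity and $\theta^\flat_\oi,\theta^\sharp_\oi$ are mutually inverse isomorphisms.

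The substance is therefore (i): verifying that $\theta^\flat_\oi$ sends each defining relation of $\Br(\wtq,\W)$ to a consequence of the relations of $\Br(\wtq',\W')$ — and then $\theta^\sharp_\oi$ by the involutivity already used. Since mutation at $\oi$ is local, affecting only the arrows at $\oi$, the arrows running from in-neighbours to out-neighbours of $\oi$, and the potential terms meeting these, the check reduces to finitely many local configurations (and, as $(\wtq,\W)$ is of finite type, to finitely many quivers with potential in all). A braid relation $\Br^m(b_\oj,b_{\oj'})$ coming from an edge $\oj\overset{m}{-}\oj'$ of $\wtq$ falls, depending on the position of the edge relative to $\oi$, into one of three cases: the edge is untouched by $\mu_\oi$ and $\theta^\flat_\oi$ fixes $b_\oj,b_{\oj'}$, so the relation is preserved verbatim; the edge (with its weight) persists in $\wtq'$ while $\theta^\flat_\oi$ conjugates one or both of $b_\oj,b_{\oj'}$ by $b'_\oi$, in which case one uses $\Br^m(a,b)\Leftrightarrow\Br^m(a^b,b)$, the invariance of braid relations under simultaneous conjugation, and the commutation of $b'_\oi$ with the generators at the vertices not adjacent to $\oi$ in $\wtq'$; or there is a path $\oj\to\oi\to\oj'$, so that $\mu_\oi$ either creates an arrow $\oj\to\oj'$ or, when an arrow $\oj'\to\oj$ is also present in $\wtq$ (whence a $3$-cycle of $\W$), cancels the resulting $2$-cycle and deletes the edge $\oj\oj'$ — in which case the relation is entangled with a potential term and one rewrites it through the equivalent forms of the type I--IV relations collected in the Remark following \Cref{def:braid group}.

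It remains to treat the potential-term relations I--IV themselves. For each, the plan is: compute, using the mutation rule of \cite{DWZ}, what the corresponding cycle of $\W$ becomes under $\mu_\oi$ — again a cycle of one of the shapes I--IV of \Cref{def:braid group}, or a $2$-cycle that is cancelled and thereby absorbed into an edge relation — and then verify that $\theta^\flat_\oi$ applied to the relation coincides, after both sides are rewritten into the common normal forms provided by the Remark, with the appropriate relation of $\Br(\wtq',\W')$. When $\wtq$ is mutation-equivalent to a simply laced Dynkin quiver, or to one of types $B,C,F,G$ obtained from such by the folding $f$, this matching reduces to the corresponding established statements (cf.\ \cite{KQ2,Q16,QZy}); the genuinely new cases are types $H$ and $I$, namely the potential terms of types III and IV together with the weight-$5$ instances of types I and II. I expect the main obstacle to be precisely this final bookkeeping: carrying out the DWZ mutation of a $5$-cycle, or of a $4$-cycle with mixed weights $3$ and $5$, and confirming the resulting braid-group identity; the configurations in which $\mu_\oi$ cancels a $2$-cycle — so that an edge relation on one side of the mutation matches a potential-term relation on the other — are the most delicate, and the reformulations in the Remark are what keep the argument uniform rather than an ad hoc case split.
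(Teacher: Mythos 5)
Your proposal follows essentially the same route as the paper: observe that \eqref{eq:flat} and \eqref{eq:sharp} are mutually inverse on the free groups generated by the vertices (so only preservation of relations needs checking), quote the known results for the ADE and the $B_n=C_n,F_4,G_2$ cases, and finish with a finite case-by-case verification for types $H$ and $I_2(m)$ using the equivalent reformulations of the type I--IV relations from the Remark. The only thing you leave as a plan rather than carry out --- the explicit mutations of the weight-$5$ configurations and the matching of the resulting braid-group identities --- is exactly what the paper supplies in its displayed list of three local cases, so the method is the same and the remaining difference is executed bookkeeping rather than a missing idea.
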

\begin{proof}
    The assertion follows by \cite{GM} (see also \cite{QZy}) for simply laced Dynkin quivers and by \cite[Thm.~6.1]{HHLP} for weighted quivers of types $B_n=C_n,F_4,G_2$. We only prove weighted QPs of types $H$ and $\I$. Formulas ~\eqref{eq:flat} and ~\eqref{eq:sharp} define two homomorphisms $\theta^\sharp_\oi$ and $\theta^\flat_\oi$ between the groups that are freely generated by the vertices of $\wtq$ and $\wtq'$, respectively. Moreover, $\theta^\sharp_\oi$ is the composition of $\theta^\flat_\oi$ with the conjugation by $b_\oi$, and $\theta^\sharp_\oi$ and $\theta^\flat_\oi$ are mutually inverse. So we only need to show $\theta^\sharp_\oi$ preserves the relations in the presentation of $\Br(\wtq',\W')$.

	For any vertex $j$, denote by $t_\oj=\theta^\sharp_\oi(b'_\oj)$. Then we have
    \[\Br^m(b'_\oi,b'_\oj)\xLongleftrightarrow{\theta^\sharp_\oi} \Br^m(t_\oi,t_\oj)\xLongleftrightarrow{~\eqref{eq:sharp}}
    \begin{cases}
    \Br^m(b_\oi,(b_\oj)^{\iv{b_\oi}})\xLongleftrightarrow{(-)^{\iv{b_\oi}}} \Br^m(b_\oi,b_\oj) & \text{if }\oi\to\oj\textbf{ in }\wtq',\\
        \Br^m(b_\oi,b_\oj) & \text{otherwise.}
    \end{cases}\]
    where $m$ is the weight of the edge in $(\wtq',\W')$ between $\oi$ and $\oj$. A similar discussion shows that $\Br^k(t_\oj,t_\ol)$ holds for any $\oj,\ol\neq\oi$ such that there is no cycles between $\oi,\oj,\ol$ in $(\wtq,\W)$ and $(\wtq',\W')$, where $k$ is the weight of the edge in $(\wtq',\W')$ between $\oj$ and $\ol$.

    It remains to show that $\theta^\sharp_\oi$ preserves the relations of cycles of $\Br(\wtq',\W')$. A direct calculation shows that, up to the opposite of weighted QPs, all possible cycles and related mutations are listed below. The images of the relations in $\Br(\wtq',\W')$ under $\theta^\sharp_\oi$ are equivalent to the relations in $\Br(\wtq,\W)$, where $t_\oj=\theta^\sharp_\oi(b'_\oj)$. In each case of the list, the left weight QP is (the full subquiver of) $(\wtq',\W')$ and the right weighted QP is (the corresponding full subquiver of) $(\wtq,\W)$, and an equivalence may use equivalences above it.
\begin{gather*}
\begin{tikzpicture}[scale=0.5,
  arrow/.style={->,>=stealth},
  equalto/.style={double,double distance=2pt},
  mapto/.style={|->}]
\node (x2) at (0,2){};
\node (x1) at (2,-1){};
\node (x3) at (-2,-1){};
\node at (x1){$\1$};
\node at (x2){$\2$};
\node at (x3){$\3$};
\foreach \n/\m in {3/2,2/1}{\draw[arrow] (x\n) to (x\m);}
\draw(-1.3,.1)node[above]{\tiny{$3$}}(1.3,.1)node[above]{\tiny{$5$}};
\draw(4,1)node[above]{$\mu_\2$}node[below]{\tiny{$t_\1=b_\1^{\iv{b_\2}}$}};
\draw(4,-.4)node{\tiny{$t_\2=b_\2$}}(4,-1)node{\tiny{$t_\3=b_\3$}};
\draw[arrow,decorate, decoration={snake}](3,1)to(5,1);
\node (x2) at (0+8,2){};
\node (x1) at (2+8,-1){};
\node (x3) at (-2+8,-1){};
  \node at (x1){$\1$};
  \node at (x2){$\2$};
  \node at (x3){$\3$};
  \foreach \n/\m in {1/2,2/3,3/1}
    {\draw[arrow]
     (x\n) to (x\m);}
     \draw(6.7,.1)node[above]{\tiny{$3$}}(9.4,.1)node[above]{\tiny{$5$}}(8,-1.3)node{\tiny{5}};
\draw(17,.5)node{
$\begin{array}{ccl}
\Co(t_\1,t_\3)&\Longleftrightarrow&\Co(b_\1,b_\3^{b_\2})\\
\Br^5(t_\1,t_\2)&\Longleftrightarrow&\Br^5(b_\1,b_\2)\\
\Br(t_\2,t_\3)&\Longleftrightarrow&\Br(b_\2,b_\3)
\end{array}$
};
\end{tikzpicture}
\end{gather*}
\begin{gather*}
\begin{tikzpicture}[scale=0.5,
  arrow/.style={->,>=stealth},
  equalto/.style={double,double distance=2pt},
  mapto/.style={|->}]
\node (x) at (-3,2){};
\node (x2) at (0,2){};
\node (x1) at (2,-1){};
\node (x3) at (-2,-1){};
\node at (x1){$\1$};
\node at (x2){$\2$};
\node at (x3){$\3$};
\foreach \n/\m in {3/2,2/1,1/3}{\draw[arrow] (x\n) to (x\m);}
\draw(-1.3,.1)node[above]{\tiny{$5$}}(1.3,.1)node[above]{\tiny{$5$}}(0,-1.3)node{\tiny{$3$}};
\draw(4,1)node[above]{$\mu_\2$}node[below]{\tiny{$t_\1=b_\1^{\iv{b_\2}}$}};
\draw(4,-.4)node{\tiny{$t_\2=b_\2$}}(4,-1)node{\tiny{$t_\3=b_\3$}};
\draw[arrow,decorate, decoration={snake}](3,1)to(5,1);
\node (x2) at (0+8,2){};
\node (x1) at (2+8,-1){};
\node (x3) at (-2+8,-1){};
  \node at (x1){$\1$};
  \node at (x2){$\2$};
  \node at (x3){$\3$};
  \foreach \n/\m in {1/2,2/3,3/1}
    {\draw[arrow]
     (x\n) to (x\m);}
\draw(6.7,.1)node[above]{\tiny{$5$}}(9.4,.1)node[above]{\tiny{$5$}}(8,-1.3)node{\tiny{5}};
\draw(17.5,.5)node{
$\begin{array}{ccl}
\Co(t_\1,t^{t_\3}_\2)&\Longleftrightarrow&\Co(b_\1,b_\2^{b_\3b_\2})\\
\Br(t_\1,t_\3)&\Longleftrightarrow&\Br(b_\1,b^{b_\2}_\3)\\
\Br^5(t_\1,t_\2)&\Longleftrightarrow&\Br^5(b_\1,b_\2)\\
\end{array}$
};
\end{tikzpicture}
\end{gather*}

\begin{gather*}
\begin{tikzpicture}[yscale=0.6,xscale=0.5,
  arrow/.style={->,>=stealth},
  equalto/.style={double,double distance=2pt},
  mapto/.style={|->}]
\node (x2) at (-4,2){};
\node (x3) at (0,2){};
\node (x4) at (0,-1){};
\node (x1) at (-4,-1){};
  \node at (x1){$\1$};
  \node at (x2){$\2$};
  \node at (x3){$\3$};
  \node at (x4){$\4$};
  \foreach \n/\m in {2/1,3/2,1/3,4/1,3/4}
    {\draw[arrow]
     (x\n) to (x\m);}
\draw(-4,.5)node[left]{\tiny{$3$}}(0,.5)node[right]{\tiny{$3$}}(-2,-1)node[below]{\tiny{$5$}}(-2,2)node[above]{\tiny{$5$}}(-2.2,.7)node{\tiny{$5$}};
\draw(2.5,1.5)node[above]{$\mu_\2$}node[below]{\tiny{$t_\1=b_\1^{\iv{b_\2}}$}};
\draw(2.5,.1)node{\tiny{$t_\2=b_\2$}}(2.5,-.5)node{\tiny{$t_\3=b_\3$}}(2.5,-1.1)node{\tiny{$t_\4=b_\4$}};
\draw[arrow,decorate, decoration={snake}](1.5,1.5)to(3.5,1.5);
\node (x2) at (5,2){};
\node (x3) at (9,2){};
\node (x4) at (9,-1){};
\node (x1) at (5,-1){};
\node at (x1){$\1$};
\node at (x2){$\2$};
\node at (x3){$\3$};
\node at (x4){$\4$};
\foreach \n/\m in {1/2,2/3,3/4,4/1}{\draw[arrow] (x\n) to (x\m);}
\draw(5,.5)node[left]{\tiny{$3$}}(9,.5)node[right]{\tiny{$3$}}(7,-1)node[below]{\tiny{$5$}}(7,2)node[above]{\tiny{$5$}};
\draw(16,.5)node{
$\begin{array}{ccl}
\Br(t_\1,t_\2)&\Longleftrightarrow&\Br(b_\1,b_\2)\\
\Co(t_\3,t^{t_\2}_\1)&\Longleftrightarrow&\Co(b_\1,b_\3)\\
\Br(t_\3,t_\4)&\Longleftrightarrow&\Br(b_\3,b_\4)\\
\Co(t_\3,t^{t_\4}_\1)&\Longleftrightarrow&\Co(b_\3,b_\2^{b_\1b_\4})\\
\Br^5(t_\1,t_\3)&\Longleftrightarrow&\Br^5(b_\2,b_\3)
\end{array}$
};
\end{tikzpicture}
\end{gather*}

\end{proof}

\section{Cluster braid groups: weighted case}

\subsection{Cluster exchange graphs}\

Let $(Q, W)$ be a quiver with potential that mutation equivalent to $\Lambda$. Denote by $\Gamma(Q,W)$ be the \emph{Ginzburg dg algebra} associated to $(Q,W)$ (c.f. \cite[\S 7.2]{K12} for the definition). Let $\pvd(\Lambda)$ be the \emph{finite-dimensional (or perfectly valued) derived category} of $\Gamma(Q,W)$, and $\per(\Lambda)$ be the \emph{perfect derived category} of $\Gamma(Q,W)$. The Verdier quotient $\C(\Lambda):=\per(\Lambda)/\pvd(\Lambda)$ is known as the \emph{cluster category} of $\Lambda$.

An object $\Y$ in $\C(\Lambda)$ is said to be \emph{rigid} if it has no self-extensions, and is a \emph{cluster tilting object} (CTO for short) if, furthermore, $\Y$ is basic and maximal with respect to this property.

\begin{Def}[{\cite{IY}}]\label{def:IY}
	Let $\Y=\oplus^n_{i=1}\Y_i$ be a basic CTO in $\C(\Lambda)$. The \emph{forward mutation} $\mu^\sharp_i(\Y)$ of $\Y$ at $\Y_i$ is defined as the CTO obtained by replacing $\Y_i$ with $\Y^\sharp_i$, where
	\[\Y^\sharp_i=\Cone(\Y_i\to\oplus_{j\neq i}\Irr(\Y_i,\Y_j)^*\otimes \Y_j)\]
	with $\Irr(\Y_i,\Y_j)$ the space of irreducible morphisms between $\Y_i$ and $\Y_j$.
\end{Def}

The \emph{oriented cluster exchange graph} $\CEG(\Lambda)$ of the cluster category $\C(\Lambda)$ is the graph whose vertices are CTOs and whose edges correspond to forward mutations. The \emph{unoriented cluster exchange graph} $\uCEG(\Lambda)$ of the cluster category $\C(\Lambda)$ is the oriented graph obtained from $\CEG(\Lambda)$ by replacing each 2-cycle given by $\mu^\sharp_i$ with a single edge $\mu_i$.
\begin{lem}[{\cite[Prop.~4.5]{Q15}}]
    $\pi_1(\uCEG(\Lambda))$ is generated by squares and pentagons.
\end{lem}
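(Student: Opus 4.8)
\emph{Proof plan.} I would first reformulate the statement: it is equivalent to saying that the $2$-complex $X$ obtained from the graph $\uCEG(\Lambda)$ by attaching a $2$-cell along the boundary loop of each square and each pentagon is simply connected. Indeed, $\pi_1(\uCEG(\Lambda))$ is free, and attaching these $2$-cells quotients it by the normal closure of the boundary loops; so $X$ is simply connected exactly when every element of $\pi_1(\uCEG(\Lambda))$ is a product of conjugates of square and pentagon loops, which is the meaning of being ``generated by squares and pentagons''. The plan is therefore to realize $X$ as the $2$-skeleton of a simply connected space.

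Next I would invoke the classical description of the exchange graph in Dynkin type (Fomin--Zelevinsky; Chapoton--Fomin--Zelevinsky): for $\Lambda$ simply laced Dynkin with $n=|\Lambda_0|$, the unoriented exchange graph $\uCEG(\Lambda)$ is isomorphic to the $1$-skeleton of the generalized associahedron $\mathcal{A}_\Lambda$, a simple $n$-dimensional convex polytope, with vertices the CTOs and edges the mutations $\mu_i$. This rests on the bijection between CTOs in $\C(\Lambda)$ and the clusters of the associated finite-type cluster algebra, and on connectedness of $\uCEG(\Lambda)$. I would then identify the $2$-faces of $\mathcal{A}_\Lambda$: each is obtained by fixing all but two of the exchangeable directions, hence is the exchange graph of a rank-$2$ cluster subalgebra, which in simply laced Dynkin type is of type $A_1\times A_1$ (a $4$-cycle, i.e.\ a square) or $A_2$ (a $5$-cycle, i.e.\ a pentagon). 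So the $2$-skeleton of $\mathcal{A}_\Lambda$ is precisely $X$.

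To conclude, I would split on $n$. For $n\le 2$ the statement is immediate: $A_1$ gives a single edge with trivial $\pi_1$, $A_1\times A_1$ gives a single square, and $A_2$ gives a single pentagon. For $n\ge 3$ the boundary $\partial\mathcal{A}_\Lambda$ is homeomorphic to $S^{n-1}$ and hence simply connected; since attaching cells of dimension $\ge 3$ does not change the fundamental group, the $2$-skeleton $X$ of $\partial\mathcal{A}_\Lambda$ is simply connected as well, which is what we wanted.

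The main obstacle is the structural input in the middle step: that the cluster-categorical graph $\uCEG(\Lambda)$ really is the edge graph of the generalized associahedron --- which needs the identification of CTO-mutation with cluster mutation together with connectivity of $\uCEG(\Lambda)$ --- and the exact enumeration of its $2$-faces. An alternative, polytope-free route is to prove directly that $X$ is simply connected, by a double induction on $n$ and on the length of a based loop, reducing an arbitrary closed mutation sequence to the empty one using only the commutation moves $\mu_i\mu_j=\mu_j\mu_i$ for non-adjacent $i,j$ (square relations) and the pentagon move for an $A_2$ pair; there the delicate point is the combinatorial bookkeeping that confines a given loop to a rank-$2$ or rank-$3$ sub-exchange-graph on which these moves can be applied.
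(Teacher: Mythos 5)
Your argument is correct, but it takes a genuinely different route from the one this lemma rests on. The paper gives no proof here at all -- it imports the statement from \cite[Prop.~4.5]{Q15} -- and the proof there (echoed in this paper's own generalization, Proposition~\ref{prop:fund gp}) is categorical: one identifies $\uCEG(\Lambda)$ with the interval $[\k\overrightarrow{\Lambda},\k\overrightarrow{\Lambda}[1]]$ in the silting/tilting exchange graph, produces the squares and pentagons by rank-two (silting or Calabi--Yau) reduction, and runs an inductive contraction of loops inside that directed exchange graph. You instead invoke polytopality: $\uCEG(\Lambda)$ is the $1$-skeleton of the generalized associahedron, its $2$-faces are exactly the squares ($A_1\times A_1$) and pentagons ($A_2$) because a finite-type skew-symmetric exchange matrix has $|b_{ij}|\le 1$, and the $2$-skeleton of the boundary sphere $S^{n-1}$ ($n\ge 3$) is simply connected since attaching higher cells does not change $\pi_1$. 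Both steps of yours are sound, and your reduction of the lemma to simple connectivity of the $2$-complex is the right reading of ``generated by''. What each approach buys: yours is shorter and more geometric, but it outsources the hard content to the Chapoton--Fomin--Zelevinsky polytopality theorem and to the identification of CTO-mutation with cluster mutation; the categorical route avoids polytopes entirely and, more importantly for this paper, survives in the oriented setting (the groupoid $\ceg$ with the relations $\x^2=\y^m$) and in the silting exchange graph, where no convex polytope is available. Note also that your argument, combined with Lemma~\ref{prop:ceg}, would yield Proposition~\ref{prop:fund gp} directly, since the $2$-faces of the non-simply-laced generalized associahedra are precisely the $(m+2)$-gons.
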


For each $\oi\in\Delta_0$, the \emph{weighted forward mutation} is defined as $\mu^\sharp_\oi=\prod_{i\in f^{-1}(\oi)}\mu^\sharp_i$, which, as the categorical interpretation of $\mu_\oi(\Delta)$, is well-defined. The \emph{weighted cluster tilting object} (weighted CTO for short) is the object $\Y$ in $\C(\Delta)$  obtained from the initial CTO $\Y_\Delta:=\k\overrightarrow{\Delta}$ via weighted-mutations.

The oriented cluster exchange graph $\CEG(\Delta,f)$ of $\Delta$ has weighted CTOs as vertices and weighted mutations as edges. The unoriented cluster exchange graph $\uCEG(\Delta,f)$ is obtained from $\CEG(\Delta,f)$ by replacing each 2-cycle given by $\mu^\sharp_\oi$ with a single edge $\mu_\oi$.

\begin{remark}
    The construction of generalized associahedron in \cite{FZ1} is valid for the Coxeter-Dynkin diagram of type $H$ and $I$. The 1-skeleton of the associahedron is exactly the cluster exchange graph. In \cite[\S~5.3]{FR}, the 1-skeleton of the associahedron of type $H_3$ was constructed. In \cite{DT,DT2}, there is a categorical approach to construct the cluster exchange graph.
\end{remark}

\begin{lem}[{\cite[Thm.~5.2]{QZx}}]\label{prop:ceg}
The unoriented cluster exchange graph $\uCEG(\Delta,f)$ is a generalized associahedron of $\overrightarrow{\Delta}$ in \cite{FZ1}, i.e. independent of $f$.
\end{lem}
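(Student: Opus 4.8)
The plan is to deduce this from the simply laced case. For a simply laced $\Lambda$ the unoriented cluster exchange graph $\uCEG(\Lambda)$ is the $1$-skeleton of the generalized associahedron $\mathcal{A}(\overrightarrow{\Lambda})$ of \cite{FZ1}: by \cite{IY, K12} the CTOs of $\C(\Lambda)$ correspond bijectively to the clusters of $\Lambda$, with mutations matching cluster exchanges, and by the finite-type classification \cite{FZ2} (see also \cite{Q15}) this exchange graph is precisely Fomin--Zelevinsky's. So the task is, for each finite weighted folding $f\colon\Lambda\to\Delta$ of \Cref{ex:fold}, to single out the weighted CTOs among the CTOs of $\C(\Lambda)$ and the weighted forward mutations $\mu^\sharp_\oi=\prod_{i\in f^{-1}(\oi)}\mu^\sharp_i$ among the mutations, and then to match these data with the clusters and exchanges of type $\Delta$.

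First I would handle $\Delta$ of type $B_n=C_n$, $F_4$ or $G_2$, where $f$ is induced by a diagram automorphism $\sigma$ of $\Lambda$ acting on $\C(\Lambda)$ compatibly with mutation \cite{CQ}. Here I would show that the weighted CTOs are exactly the $\sigma$-stable CTOs reachable from $\Y_\Delta$ by weighted mutations, so that under the CTO--cluster correspondence they biject with the $\sigma$-stable clusters of $\Lambda$; by the classical folding of finite-type cluster algebras \cite{FZ2} the latter biject with the clusters of $\Delta$, and a weighted mutation along the $\sigma$-orbit $f^{-1}(\oi)$ realizes exactly one exchange of type $\Delta$. This gives $\uCEG(\Delta,f)\cong\mathcal{A}(\overrightarrow{\Delta})$ in the crystallographic case.

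Next I would handle $\Delta$ of type $H_3$, $H_4$ or $I_2(m)$, where no diagram automorphism of $\Lambda$ induces $f$. Here I would instead use the realization of weighted mutation by mutation of real matrices of Duffield--Tumarkin \cite{DT, DT2}, or equivalently the fusion-stable description of weighted CTOs in the twin paper \cite{QZx}, to match the weighted CTOs and weighted mutations with the clusters and exchanges of Fomin--Zelevinsky's generalized associahedron of type $H$, respectively $I$. Since only the three families $H_3$, $H_4$, $I_2(m)$ occur, this can alternatively be checked by direct inspection --- for $\Delta=I_2(m)$ one verifies that $\uCEG(\Delta,f)$ is the $(m+2)$-cycle whatever Dynkin $\Lambda$ of Coxeter number $m$ is chosen (cf. \cite{FR} for $H_3$). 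Finally, $\mathcal{A}(\overrightarrow{\Delta})$ depends only on $\overrightarrow{\Delta}$ and not on any folding, and by \Cref{ex:fold} there are only finitely many finite weighted foldings of $\Delta$; as each gives $\uCEG(\Delta,f)\cong\mathcal{A}(\overrightarrow{\Delta})$, the graph $\uCEG(\Delta,f)$ is independent of $f$.

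The hard part is the non-crystallographic case: without an honest automorphism of $\Lambda$ the root-system combinatorics cannot be transported directly, and the points that require care are that a weighted mutation always lands in another weighted CTO and that it accounts for exactly one edge of $\mathcal{A}(\overrightarrow{\Delta})$ --- this is where the Duffield--Tumarkin / fusion input of \cite{QZx} is essential. The well-definedness of $\mu^\sharp_\oi$ as a product over $f^{-1}(\oi)$ is already provided by \Cref{lem:mu-well} and its categorical lift.
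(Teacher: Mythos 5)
The paper gives no proof of this lemma at all: it is imported wholesale from the twin paper \cite{QZx} (Thm.~5.2), so there is no internal argument to compare yours against. Taken on its own terms, your sketch is the natural reconstruction of what that cited theorem must contain, and it is consistent with how the present paper treats the surrounding material: the crystallographic types $B_n=C_n$, $F_4$, $G_2$ via a diagram automorphism acting on $\C(\Lambda)$ as in \cite{CQ}, the types $H_3$, $H_4$, $I_2(m)$ via Duffield--Tumarkin \cite{DT,DT2} or the fusion-stable formalism of \cite{QZx}, and the final observation that each of the finitely many foldings of \Cref{ex:fold} produces the same generalized associahedron $\mathcal{A}(\overrightarrow{\Delta})$ of \cite{FZ1}, whence independence of $f$. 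The honest caveat --- which you yourself flag --- is that the two genuinely nontrivial steps in the non-crystallographic case, namely that a weighted forward mutation $\mu^\sharp_{\mathbf{i}}=\prod_{i\in f^{-1}(\mathbf{i})}\mu^\sharp_i$ of a weighted CTO is well defined and lands on another weighted CTO, and that the resulting exchange pattern coincides with the Fomin--Zelevinsky one of type $H$ or $I$, are exactly the content you outsource to \cite{DT,DT2} and \cite{QZx}; your argument is therefore no more self-contained than the paper's citation, but it correctly identifies where the burden lies (and \Cref{lem:mu-well} only covers the quiver-level well-definedness, not the categorical lift). The $I_2(m)$ direct check and the appeal to \cite{FR} for $H_3$ match \Cref{ex:i2-gon} and the surrounding remarks in the paper, so within the stated reliance on external results the proposal is sound.
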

In the rest of this paper, we may use $\uCEG(\Delta)$ (resp. $\CEG(\Delta)$) to denote the unoriented (resp. oriented) cluster exchange graph of $\Delta$.

\begin{Ex}\label{ex:i2-gon}
    A direct calculation shows that $\uCEG(\I)$ is an $(m+2)$-gon as shown by the first picture in Figure~\ref{fig:ceg i2}, whose oriented version is as shown by the right picture.
\end{Ex}
\begin{figure}[h]\centering
	\newcommand{\vsource}{\otimes}
	\newcommand{\vsink}{\odot}
	\newcommand{\vertx}{\bullet}
	\begin{tikzpicture}[scale=.9, rotate=90,
		arrow/.style={->,>=stealth}]		
		\foreach \j in {1,2,3,4,5}
		{\draw (72*\j:2cm) node (t\j) {$\bullet$};}
		\foreach \j in {1,2,3}
		{\draw (72-72*\j:2.4cm) node {$\j$};}
		\draw (72:3cm) node {$m+2$}(72-72*4:2.5cm) node {$m+1$};
		
		\foreach \j in {4,1}
		{\draw (72*\j+36:2.1cm) node {$\mu_\1$};}
		\foreach \j in {3,5}
		{\draw (72*\j+36:2.1cm) node {$\mu_\2$};}		
		
		\foreach \a/\b in {4/3,5/4,5/1,1/2}{
			\draw (t\a) edge[]  (t\b);}
		\draw (t2) edge[dashed] (t3);
	\end{tikzpicture}
	\qquad
	\begin{tikzpicture}[scale=.9, rotate=90,
		arrow/.style={->,>=stealth}]
		\foreach \j in {1,2,3,4,5}
		{\draw (72*\j:2cm) node (t\j) {$\bullet$};}
		\foreach \j in {1,2,3}
		{\draw (72-72*\j:2.4cm) node {$\j$};}
		\draw (72:3cm) node {$m+2$}(72-72*4:2.6cm) node {$m+1$};
		\draw(36:1.6)node{$\2$};
        \draw(-36:1.6)node{$\1$};
        \draw(-108:1.6)node{$\2$};
        \draw(108:1.6)node{$\1$};
		\foreach \j in {3,4,5,1}
		{\draw (72*\j+36:2.1cm) node {$\x$};\draw (72*\j+36:1cm) node {$\y$};}
		\foreach \a/\b in {3/4,4/5,5/1,1/2}{
			\draw (t\a) edge[arrow,bend left=16]  (t\b);
			\draw (t\b) edge[arrow,bend left=16]  (t\a);}
		\draw (t2) edge[->,dashed,bend left=16] (t3);
		\draw (t3) edge[->,dashed,bend left=16] (t2);
	\end{tikzpicture}
	\caption{unoriented and oriented cluster exchange graph of $\I$}
	\label{fig:ceg i2}
\end{figure}
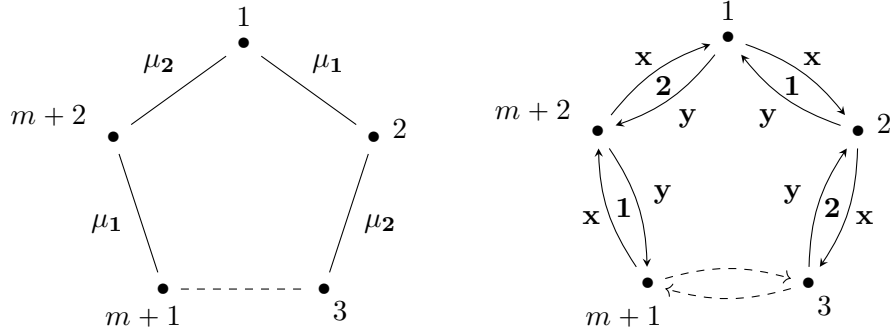

For each weighted CTO $\Y$ in $\CEG(\Delta)$, denote by $(\wtq_\Y,\W_\Y)$ the weighted QP corresponds to $\Y$, which is folded from $(Q_\Y,W_\Y)$ in $\CEG(\Lambda)$. When $\Delta=\I$, we have $\wtq_\Y=\I$ for all $\Y$ in $\CEG(\Delta)$. Denote by $\x$ (resp. $\y$) the weighted forward mutation at the object that corresponds to the sink (resp. source) of $\I$ (cf. the right picture in Figure~\ref{fig:ceg i2}).
The \emph{polygonal/(m+2)-gon relation} is defined to be
\begin{equation}\label{eq:xy}
  \x^2=\y^m.
\end{equation}

\begin{Ex}
    The unoriented cluster exchange graph of $A_3, B_3/C_3, H_3$ is as shown by Figure~\ref{fig:ABH}, where
    \begin{itemize}
        \item $\uCEG(A_3)$, as shown by the first picture, has three squares and six pentagons;
        \item $\uCEG(B_3)$, as shown by the second picture, has four hexagons
         (two in the front and two in the back) and eight squares/pentagons alternatively around them;
        \item $\uCEG(H_3)$ has six heptagons (three in the front and three in the back) and twelve squares/pentagons alternatively around them.
    \end{itemize}
\begin{figure}  \centering
  \centering\makebox[\textwidth][c]{
    \includegraphics[width=16cm]{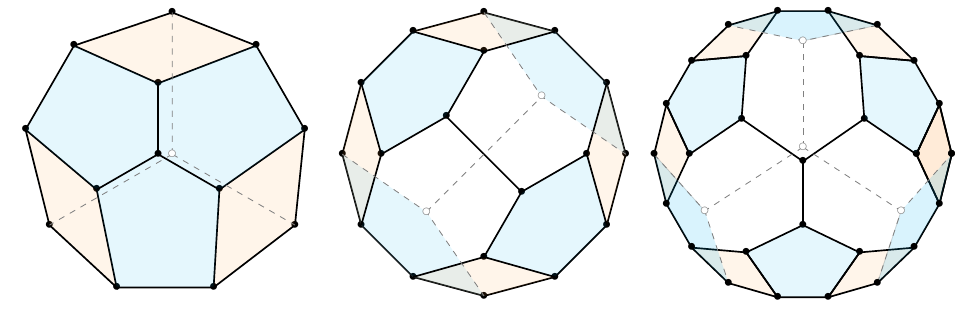}}
\caption{The cluster exchange graphs $\uCEG(\Delta)$ for type $A_3, B_3/C_3$ and $H_3$}
\label{fig:ABH}
\end{figure}
\end{Ex}

\begin{remark}\label{rem:ceg}
For each weighted CTO $\Y$ in $\uCEG(\Delta)$ and any arrow $\epsilon:\oi\xrightarrow{m}\oj$ in $\wtq_\Y$,
there is a mutation sequence of alternatively mutating $\mu^\sharp_\oi$ and $\mu^\sharp_\oj$.
Using Calabi-Yau reduction (cf. \cite[Thm.~4.2]{IY} and \cite[Thm.~7.4]{KV}),
one gets a full subgraph of $\uCEG(\Delta)$ that is isomorphic to $\uCEG(I_2(m))$,
i.e., an $(m+2)$-gon in Example~\ref{ex:i2-gon}.
\end{remark}

We prove an analogue of \cite[Prop.~4.5]{Q15} in the case of Coxeter-Dynkin diagrams
about the fundamental group of $\uCEG$.

\begin{prop}\label{prop:fund gp}
For any finite weighted quiver $\overrightarrow{\Delta}$, $\pi_1(\uCEG(\Delta))$ is generated by $(m+2)$-gons
of the form in \Cref{ex:i2-gon}, for $m\in\{\w(\alpha)|\alpha\in\overrightarrow{\Delta}_1\}$.
\end{prop}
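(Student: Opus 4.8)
\emph{Proof plan.} The plan is to realise $\uCEG(\Delta)$ as the $1$-skeleton of a polytope and to reduce the statement to a description of its $2$-faces. By \Cref{prop:ceg}, $\uCEG(\Delta)$ is the $1$-skeleton of the generalized associahedron $P=P(\overrightarrow{\Delta})$ of \cite{FZ1}, a simple convex polytope of dimension $n=\abs{\Delta_0}$. If $n=1$ there is nothing to prove, and if $n=2$ then $\overrightarrow{\Delta}=\I$ and \Cref{ex:i2-gon} identifies $P$ with the $(m+2)$-gon for $m=\w(\alpha)$ the weight of the unique edge $\alpha$, whose boundary generates $\pi_1(\uCEG(\Delta))\cong\mathbb{Z}$; so from now on assume $n\ge 3$.

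First I would reduce to the $2$-faces. Since $P$ is convex it is contractible, hence its $2$-skeleton $P^{(2)}$ is simply connected; as $P^{(2)}$ is obtained from $\uCEG(\Delta)=P^{(1)}$ by attaching the $2$-cells given by the $2$-faces $F$ of $P$, the standard presentation of the fundamental group of a CW complex shows that $\pi_1(\uCEG(\Delta))$ --- a free group, being the fundamental group of a graph --- is generated by the conjugates of the boundary loops $\partial F$, once connecting paths to $\Y$ are fixed. It therefore suffices to prove that each $2$-face of $P$ is an $(m+2)$-gon of the form in \Cref{ex:i2-gon} for some $m$ occurring as a weight in $\overrightarrow{\Delta}$ (with the convention that a non-edge has weight $2$).

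Next I would identify the $2$-faces through Calabi--Yau reduction. The facets of $P$ are indexed by the indecomposable summands of the weighted CTOs, and the face poset of $P$ is anti-isomorphic to the poset of compatible collections of such summands; hence a $2$-face corresponds to a basic rigid object $R$ in $\C(\Delta)$ with $n-2$ indecomposable summands, and that $2$-face with its edges is the cluster exchange graph of the rank-$2$ cluster category obtained from $\C(\Delta)$ by Calabi--Yau reduction at $R$ (cf.\ \cite[Thm.~4.2]{IY}, \cite[Thm.~7.4]{KV}, and \Cref{rem:ceg} for the case in which the two remaining vertices are joined; the unjoined case, giving a square, is handled identically). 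This reduced category is the cluster category of a weighted graph on two vertices, i.e.\ of type $\I$ for a unique $m\ge 2$ --- with $m=2$ exactly when the two vertices are unjoined --- so by \Cref{ex:i2-gon} the $2$-face is precisely the claimed $(m+2)$-gon.

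The remaining point, which I expect to be the crux, is to pin down which $m$ actually occur and to match them with $\{\w(\alpha)\mid\alpha\in\overrightarrow{\Delta}_1\}$. The value $m=2$ accounts for the square faces; for $m\ge 3$ the reduced quiver carries a weight-$m$ arrow, and this is a weight appearing in some exchange quiver $\wtq_\Y$ in the mutation class of $\overrightarrow{\Delta}$. One then has to check that mutation of a weighted QP creates no new edge-weights: this follows from \Cref{lem:mu-well} and \Cref{thm:mu-present}, which show that a weighted mutation merely reorients and relocates the local configuration of arrows, so that the set of edge-weights occurring in the mutation class of $\overrightarrow{\Delta}$ is contained in $\{\w(\alpha)\mid\alpha\in\overrightarrow{\Delta}_1\}$; going through the explicit foldings of \Cref{ex:fold} (equivalently, the list in \Cref{fig:CD}) also shows that the pentagon weight $m=3$ does occur whenever $n\ge 3$. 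Together with the previous paragraphs this completes the proof.
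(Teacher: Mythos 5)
Your argument is correct in substance but takes a genuinely different route from the paper's. You realise $\uCEG(\Delta)$ as the $1$-skeleton of a convex (hence contractible) polytope, invoke the standard fact that the fundamental group of the $1$-skeleton is then normally generated by the boundaries of the $2$-faces, and identify those $2$-faces via the face lattice and rank-$2$ reduction. The paper instead stays inside the categorical framework: it embeds $\uCEG(\Delta)$ into $\uCEG(\Lambda)$ via the folding, identifies $\uCEG(\Delta)$ with an interval of the silting exchange graph $\SEG(\Delta)$ (using \cite[Thm.~5.14]{KQ}), produces the oriented $(m+2)$-gons by silting reduction, and then runs the inductive argument of \cite[Prop.~4.5]{Q15} on $\SEG(\Delta)$. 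Your route is shorter and more geometric, but it consumes a stronger input than \Cref{prop:ceg} actually supplies: that lemma only identifies the graph $\uCEG(\Delta)$ with the $1$-skeleton of the generalized associahedron, whereas you also use the full face poset (faces correspond to compatible collections, $2$-faces to rank-$2$ reductions), which is the polytopality theorem of Chapoton--Fomin--Zelevinsky and, for types $H$ and $I$, of Fomin--Reading \cite{FR}; this should be cited explicitly rather than folded into \Cref{prop:ceg}. Two smaller points: the assertion that weighted mutation creates no new edge-weights does not follow from \Cref{lem:mu-well} or \Cref{thm:mu-present} as cited --- it is a separate (finite) verification, implicit in the list of local configurations in \Cref{def:braid group} and Figure~\ref{fig:cycle-present}, and it is needed equally by the paper's own proof; and your observation that pentagons occur whenever $n\ge 3$ is not needed, since the proposition only claims the generators lie among the listed polygons, not that every listed polygon occurs.
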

\begin{proof}
The assertion is followed by a similar discussion as in \cite[Prop.~4.5]{Q15}.

More precisely, fixing a finite weighted folding $\Fold$, we have the following.
\begin{itemize}
  \item
Recall that $\uCEG(\Delta)$ can be embedded in $\uCEG(\Lambda)$ in the sense that
the vertex set of $\uCEG(\Delta)$ is a subset of $\uCEG(\Lambda)$ and
edges of $\uCEG(\Delta)$ correspond to paths in $\uCEG(\Lambda)$.
Alternatively, if the finite weighted folding $\Fold$ comes from a fusion action (cf. \cite[\S~3]{QZx}), $\uCEG(\Delta)$ is exactly the fusion-stable part of $\uCEG(\Lambda)$.
  \item
The same assertion holds for the silting exchange graph.
Namely, if we consider simultaneously multi-mutation in \cite{DT} (or fusion-stable mutation in \cite{QZx}), one can introduce and embed the silting exchange graph $\SEG(\Delta)$ into $\SEG(\Lambda)$.
  \item
The cluster exchange graph $\uCEG(\Lambda)$ can be identified with the (underlying graph of) the interval $[\k\overrightarrow{\Lambda},\k\overrightarrow{\Lambda}[1]]$ of the silting exchange graph $\SEG(\Lambda)$, cf. \cite[Thm.~5.14]{KQ}.
The same statement holds for $\uCEG(\Delta)$. Thus, it is enough to show that the fundamental group of $\SEG(\Delta)$ is generated by oriented $(m+2)$-gons.
  \item Similar to \Cref{rem:ceg}, there are indeed oriented $(m+2)$-gons
  in $\SEG(\Delta)$ via silting reduction (cf. \cite[Lem.~4.3]{Q15} or \cite[Lem.~6.1]{QW}
  for the dual statement of exchange graphs of hearts).
  Note that any such oriented $(m+2)$-gon is of the form \eqref{eq:xy},
  in the sense that there exists a source and a sink of the $(m+2)$-gon
  and the two paths from the source to the sink have length 2 and $m$ respectively.
  Finally, one can follow the proof strategy of \cite[Prop.~4.5]{Q15} to show that $\pi_1(\SEG(\Delta))$ is generated by oriented $(m+2)$-gons.
\end{itemize}
\end{proof}

\subsection{Cluster braid groups for weighted QPs}\

Given a weighted graph $\Delta$, denote by $\mathcal{W}(\CEG(\Delta))$ the \emph{path groupoid} of $\CEG(\Delta)$, i.e., the category whose objects are vertices in $\CEG(\Delta)$ and whose generating morphisms are edges of $\CEG(\Delta)$ and their formal inverse. Recall that $\x$ (resp. $\y$) is the weighted forward mutation at the object that corresponds to the sink (resp. source) of a $\I$ subgraph of $\Delta$.
There is a polygonal/(m+2)-gon relation of the form $ \x^2=\y^m$ \eqref{eq:xy}.
By \Cref{prop:fund gp}, fundamental group $\pi_1(\uCEG(\Delta))$ is generated by (m+2)-gon relations. We could construct a groupoid from $\CEG(\Delta)$ by adding these (m+2)-gon relations.

\begin{Def}\label{def:ceg-wtq}
The \emph{oriented cluster exchange groupoid} $\ceg(\Delta)$ is defined to be the quotient of $\mathcal{W}(\CEG(\Delta))$ by all polygonal relations.

For each weighted CTO $\Y$ and $\oi\in\wtq_{\Y}$,
the \emph{local twist} $t^\Y_\oi$ is the 2-cycle in $\mathcal{W}(\CEG(\Delta))$ corresponding to the weighted forward mutation $\mu^\sharp_{\oi}$ at $\Y$.

The \emph{cluster braid group} $\CBr_\Delta(\Y)$ is defined to be the subgroup of $\pi_1(\ceg(\Delta),\Y)$ generated by all local twists $t^\Y_\oi,\oi\in\Delta_0$.
\end{Def}

Let $\Y$ be a weighted CTO in $\CEG(\Delta)$ and $f^{-1}(\oi)=\{i_1,\cdots,i_p\}$. Denote by $\x$ (resp. $\y$) the weighted forward mutation of $\Y$ (resp. $\Y'=\mu_\oi(\Y)$) at $\oi$.
Denote by $x_{i_j}$ (resp. $y_{i_j}$), $1\leq j\leq p$ the forward mutation of the corresponding object at $i_j$, see Figure~\ref{fig:local twist}.
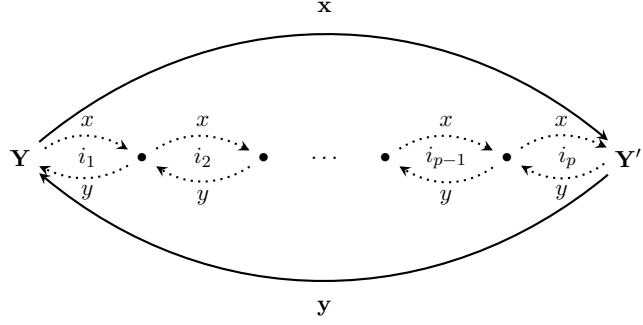
\begin{figure}[htpb]\centering
    \begin{tikzpicture}[scale=0.8,transform shape]
        \node (a)at (-5,0){$\Y$};
        \node (b)at (-3,0){$\bullet$};
        \node (c)at (-1,0){$\bullet$};
        \node (d)at (1,0){$\bullet$};
        \node (e)at (3,0){$\bullet$};
        \node (f)at (5,0){$\Y'$};
        \draw(0,0)node{$\ldots$};

        \draw[thick,-{stealth}](a)to[out=40,in=140](f);
        \draw[thick,-{stealth}](f)to[out=220,in=-40](a);

        \draw[thick,-{stealth},dotted](-4.6,.15)to[out=35,in=150](b);
        \draw[thick,-{stealth},dotted](b)to[out=32,in=153](c);
        \draw[thick,-{stealth},dotted](d)to[out=32,in=153](e);
        \draw[thick,-{stealth},dotted](e)to[out=34,in=155](f);

        \draw[thick,-{stealth},dotted](4.6,-.1)to[out=217,in=-30](e);
        \draw[thick,-{stealth},dotted](e)to[out=215,in=-32](d);
        \draw[thick,-{stealth},dotted](c)to[out=213,in=-32](b);
        \draw[thick,-{stealth},dotted](b)to[out=210,in=-25](a);

        \draw(0,2.5)node{$\x$}(0,-2.5)node{$\y$}(-3.9,0)node{$i_1$}(-2,.6)node{$x$}(-2,-.65)node{$y$}(-2,0)node{$i_2$}(2,.6)node{$x$}(2,-.65)node{$y$}(2,0)node{$i_{p-1}$}(4,0)node{$i_p$}(-3.9,.6)node{$x$}(-3.9,-.58)node{$y$}(3.9,.6)node{$x$}(3.9,-.58)node{$y$};
    \end{tikzpicture}
\caption{Decomposing of local twist $t^\Y_\oi$ in $\CEG(\Delta)$ as local twists (dotted 2-cycles) in $\CEG(\Lambda)$}
\label{fig:local twist}
\end{figure}

By \cite[Prop.~2.8]{KQ2} and commutativity between mutations, we have
\begin{equation}\label{eq:local twist}
	t^\Y_\oi=\x\y=(x_{i_1}\cdots x_{i_p})(y_{i_p}\cdots y_{i_1})=t^\Y_{i_p}\cdots t^\Y_{i_1}=\prod_{k\in f^{-1}(\oi)}t^\Y_k.
\end{equation}

Similarly to \cite[\S~2.4]{KQ2}, we define the conjugation map
\begin{gather*}
    \begin{array}{rcl}
        \ad_\oi:\pi_1(\ceg(\Lambda),\Y) & \longrightarrow & (\ceg(\Delta),\Y')\\
        t & \mapsto & \x^{-1} t\x.
    \end{array}
\end{gather*}
Note that
\begin{equation}\label{eq:ad}
	\ad_\oi(t)=\x^{-1}t\x=(x^{-1}_{i_p}\cdots x^{-1}_{i_2}x^{-1}_{i_1})t(x_{i_1}x_{i_2}\cdots x_{i_p})=\ad_{i_p}\circ\cdots\circ\ad_{i_2}\circ\ad_{i_1}(t).
\end{equation}
Under the above setting, we have the analogue of \cite[Prop.~2.9]{KQ2}.

\begin{prop}\label{lem:conj}
	Let \begin{tikzcd}
	    \Y\ar[r,bend left=17,"\x","\oi"'] & \Y'\ar[l,bend left=17,"\y"]
	\end{tikzcd}
    be a subgraph in $\CEG(\Delta)$ with $\x$ (resp. $\y$) the weighted forward mutation of $\Y$ (resp, $\Y'$) at $\oi\in\Delta_0$. Then
	\[\ad_\oi(t_\oj)=\begin{cases}
		(t'_\oi)^{-1}t'_\oj t'_\oi&\text{ if there are arrows from $\oj$ to $\oi$ in $\overrightarrow{\Delta}_1$,}\\
		t'_\oj&\text{otherwise,}
	\end{cases}\]
	where $t_\oj$ (resp. $t'_\oj$), $\oj\in\Delta_0$, is the local twist based at $\Y$ (resp. $\Y'$). Hence $\ad_\oi$ induces an isomorphism between $\CBr_\Delta(\Y)$ and $\CBr_\Delta(\Y')$. In particular, we have $\pi_1(\ceg(\Delta),\Y)\cong\CBr_\Delta(\Y)$.
\end{prop}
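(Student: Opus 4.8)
The plan is to mimic the strategy of \cite[Prop.~2.9]{KQ2}, reducing everything to the simply laced case already established there, and then unfolding. First I would fix a finite weighted folding $\Fold$ and recall that the local twist $t_\oi$ decomposes as the product $\prod_{k\in f^{-1}(\oi)}t_k$ of simply laced local twists, as in \eqref{eq:local twist}, and that $\ad_\oi$ decomposes as the composite $\ad_{i_p}\circ\cdots\circ\ad_{i_1}$ via \eqref{eq:ad}. The point of these two formulas is that the weighted assertion becomes a bookkeeping consequence of the simply laced one: for each $k\in f^{-1}(\oi)$, \cite[Prop.~2.9]{KQ2} tells us how $\ad_k$ acts on each $t_l$ (conjugation by $t'_k$ when there is an arrow $l\to k$ in $\overrightarrow{\Lambda}$, and trivially otherwise). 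Composing these over $k\in f^{-1}(\oi)$ and using that there is an arrow $\oj\to\oi$ in $\overrightarrow{\Delta}$ precisely when there is an arrow $j\to k$ in $\overrightarrow{\Lambda}$ for the relevant (all, by the folding structure) $j\in f^{-1}(\oj)$, $k\in f^{-1}(\oi)$, one obtains the stated formula for $\ad_\oi(t_\oj)$ after recombining the simply laced twists into weighted ones via \eqref{eq:local twist}.

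Next I would address the subtlety that, unlike in the simply laced case, the vertices in $f^{-1}(\oi)$ are mutated in sequence and the intermediate objects are not weighted CTOs; so one must check that the partial products $t_{i_j}\cdots t_{i_1}$ and the partial composites of $\ad_{i_j}$ behave compatibly, i.e.\ that the formula from \cite[Prop.~2.9]{KQ2} can be applied iteratively along the path $\Y\to\bullet\to\cdots\to\Y'$ pictured in Figure~\ref{fig:local twist}. Since the vertices in $f^{-1}(\oi)$ are pairwise non-adjacent in $\overrightarrow{\Lambda}$ (the folding sends them all to the single vertex $\oi$, which has no loop), mutations at them commute and none of them affects the arrows among the $f^{-1}(\oj)$ relevant to the computation; hence each application of \cite[Prop.~2.9]{KQ2} is legitimate and the conjugating elements telescope correctly into $t'_\oi$ by \eqref{eq:local twist}. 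This also shows $\ad_\oi$ is a well-defined isomorphism $\pi_1(\ceg(\Lambda),\Y)\to\pi_1(\ceg(\Delta),\Y')$ — wait, I must be careful here and state it as $\pi_1(\ceg(\Delta),\Y)\to\pi_1(\ceg(\Delta),\Y')$ — because $\x$ is an honest isomorphism in the groupoid $\ceg(\Delta)$ and conjugation by it is its own inverse construction from $\Y'$ back to $\Y$.

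Finally, to deduce $\pi_1(\ceg(\Delta),\Y)\cong\CBr_\Delta(\Y)$, I would argue as follows. Since $\CEG(\Delta)$ is connected (any weighted CTO is reached from $\Y_\Delta$ by weighted mutations), any loop at $\Y$ in $\ceg(\Delta)$ is a product of conjugates $w^{-1}\,r\,w$ of polygonal relators $r$ by paths $w$ from $\Y$; by \Cref{prop:fund gp} the relators are exactly the $(m+2)$-gon relations, and each such relation, being of the form $\x^2=\y^m$ sitting inside an $I_2(m)$-subgraph, is expressible in terms of local twists at the source/sink of that subgraph. Transporting along $w$ and repeatedly applying the conjugation formula just proved rewrites $w^{-1}rw$ as a word in the local twists $t_\oi$ based at $\Y$. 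Hence the subgroup generated by the $t^\Y_\oi$ is all of $\pi_1(\ceg(\Delta),\Y)$, giving the claimed equality. The main obstacle I anticipate is the second step: verifying carefully that the non-CTO intermediate objects do not obstruct the iterated application of \cite[Prop.~2.9]{KQ2} and that the conjugating factors telescope precisely to $t'_\oi$ rather than to some twisted variant — this requires a clean statement that mutations within $f^{-1}(\oi)$ commute and preserve the relevant arrow data, which is where the structure of weighted foldings (\Cref{def:fold}) really gets used.
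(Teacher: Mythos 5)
Your proposal follows essentially the same route as the paper: decompose $t_\oi$ and $\ad_\oi$ along the fibre $f^{-1}(\oi)$ via \eqref{eq:local twist} and \eqref{eq:ad}, apply the simply laced statement of \cite[Prop.~2.9]{KQ2} factor by factor, and recombine; the final claim $\pi_1(\ceg(\Delta),\Y)\cong\CBr_\Delta(\Y)$ is likewise deduced from \Cref{prop:fund gp}, exactly as in the paper. One inaccuracy should be corrected: when $\oj\overset{m}{-}\oi$ is an edge of $\Delta$, it is \emph{not} true that every $j\in f^{-1}(\oj)$ is adjacent to every $k\in f^{-1}(\oi)$ --- for a trivial folding each $j$ meets exactly one $k$, and for a dihedral folding $f^{-1}(\epsilon)$ is bipartite but not complete bipartite (e.g.\ $D_6\to H_3$). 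Hence the telescoping of the conjugating factors into the single element $t'_\oi$ does not follow from adjacency alone; it needs the commutation of local twists at non-adjacent vertices, i.e.\ \cite[Lem.~2.7]{KQ2}, which is precisely the extra ingredient the paper invokes at that step. With that lemma inserted your argument coincides with the paper's proof.
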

\begin{proof}
	If there is no arrow in $\overrightarrow{\Delta}$ from $\oj$ to $\oi$, then there is no arrows in $\overrightarrow{\Lambda}$ from $f^{-1}(\oj)$ to $f^{-1}(\oi)$. Let $f^{-1}(\oi)=\{i_1,i_2,\cdots,i_p\}$ and $f^{-1}(\oj)=\{j_1,j_2,\cdots,j_q\}$. By equation~\eqref{eq:local twist}, ~\eqref{eq:ad} and \cite[Prop.~2.8]{KQ2}, we have
	\[\ad_\oi(t_\oj)=\ad_{i_p}\circ\cdots\circ\ad_{i_2}\circ\ad_{i_1}(t_{j_q}\cdots t_{j_2}t_{j_1})=(t'_{j_q}\cdots t'_{j_2}t'_{j_1})=t'_\oj.\]
	Similarly, if there is an arrow from $\oj$ to $\oi$ then we have
	\[\begin{aligned}
		\ad_\oi(t_\oj){}&=\ad_\oi(t_{j_q})\cdots\ad_\oi(t_{j_2})\ad_\oi(t_{j_1}){}\\
        &=(\ad_{i_1}\circ\cdots\circ\ad_{i_p})(t_{j_q})\cdots(\ad_{i_1}\circ\cdots\circ\ad_{i_p})(t_{j_1}){}\\
		&=\big((t_{i_p}\cdots t_{i_1})^{-1}t_{j_q}(t_{i_p}\cdots t_{i_1})\big)\cdots\big((t_{i_p}\cdots t_{i_1})^{-1}t_{j_1}(t_{i_p}\cdots t_{i_1})\big){}\\
        &=(t_{i_p}\cdots t_{i_1})^{-1}(t_{j_q}\cdots t_{j_1})(t_{i_p}\cdots t_{i_1})=(t'_\oi)^{-1}t'_\oj t'_\oi
    \end{aligned}\]
	where the third equality follows from \cite[Lem.~2.7]{KQ2}.
	
	By Proposition~\ref{prop:fund gp}, $\pi_1(\uceg(\Delta),\Y)$ is trivial. A similar argument to \cite[Prop.~2.9]{KQ2} shows that $\pi_1(\ceg(\Delta),\Y)\cong\CBr_\Delta(\Y)$. So the last assertion follows.
\end{proof}

\subsection{Cluster braid groups as braid groups}\
We simply denote $\CBr(\Delta)$ the cluster braid group $\CBr_\Delta(\Y_\Delta)$.

\begin{lem}\label{lem:surj}
	The following map is a group isomorphism
    \begin{gather}
        \begin{array}{rcl}
            \Br\I&\longrightarrow&\CBr(\I) \\
            b_\oi&\mapsto&t_\oi,
        \end{array}
    \end{gather}
	where $b_\oi$ (resp. $t_\oi$) is the generator of $\Br\I$ (resp. $\CBr(\I)$) corresponds to the vertex $\oi$ of $\I$.
\end{lem}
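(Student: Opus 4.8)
The plan is to exhibit the map $b_\oi\mapsto t_\oi$ as a well-defined surjective homomorphism and then check injectivity by a direct count, using the explicit $(m+2)$-gon structure of $\CEG(\I)$ from \Cref{ex:i2-gon}. First I would recall that $\Br\I=\<b_\1,b_\2\mid \Br^m(b_\1,b_\2)\>$, so to get a well-defined homomorphism it suffices to verify the single braid relation $\Br^m(t_\1,t_\2)$ inside $\pi_1(\ceg(\I),\Y_\I)\cong\CBr(\I)$. By construction $\ceg(\I)$ is the quotient of the path groupoid of the oriented $(m+2)$-gon by the polygonal relation $\x^2=\y^m$ of \eqref{eq:xy}, and along the $(m+2)$-gon the two generating edges alternate, labelled $\mu_\1$ and $\mu_\2$ (cf. the right picture in \Cref{fig:ceg i2}). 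A local twist $t_\oi=\x\y$ at the base vertex is a back-and-forth $2$-cycle. Chasing the $(m+2)$-gon, the words $t_\1 t_\2 t_\1\cdots$ and $t_\2 t_\1 t_\2\cdots$ (each of length $m$) both traverse the polygon once and differ precisely by the relation $\x^2=\y^m$; this is the same bookkeeping as in \cite[Prop.~2.9, Thm.~2.16]{KQ2} adapted to an $(m+2)$-gon instead of a pentagon or square. So $\Psi:b_\oi\mapsto t_\oi$ is a well-defined group homomorphism, and it is surjective since the $t_\oi$ generate $\CBr(\I)$ by definition.

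For injectivity I would argue as follows. By \Cref{lem:conj} (and the triviality of $\pi_1(\uceg(\I))$, which here is immediate since $\uCEG(\I)$ is a single $(m+2)$-gon, hence a contractible circle filled by its unique polygonal relation), we have $\pi_1(\ceg(\I),\Y_\I)\cong\CBr(\I)$, and moreover $\pi_1(\ceg(\I),\Y_\I)$ can be computed directly from the presentation of the groupoid: it is the group generated by the edges of the $(m+2)$-gon modulo the single relation coming from the polygon. Rewriting this one-relator presentation in terms of the two local twists $\x,\y$ at the base vertex, one finds $\pi_1(\ceg(\I))\cong\<\x,\y\mid \x^2=\y^m\>$, the $(2,m,\infty)$-triangle-type group, which is exactly the braid group $\Br\I$ under $b_\1\mapsto \x\y$, $b_\2\mapsto\y\x$ (or the appropriate normalization making the two length-$m$ alternating words agree). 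Comparing presentations gives that $\Psi$ is an isomorphism.

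An alternative, cleaner route avoids the explicit one-relator computation: since $\Br\I$ surjects onto $\CBr(\I)$ via $\Psi$, it suffices to produce a homomorphism $\CBr(\I)\to\Br\I$ splitting $\Psi$, i.e.\ to show the polygonal relation $\x^2=\y^m$ becomes trivial in $\Br\I$ under $\x\mapsto b_\1 b_\2$, $\y\mapsto b_\2 b_\1$ (so that $t_\oi\mapsto b_\oi$ is forced). Concretely, $\x^2=(b_\1 b_\2)^2$ and $\y^m=(b_\2 b_\1)^m$, and one checks $(b_\1 b_\2)^2=(b_\2 b_\1)^m$ in $\Br\I$ — this is a standard consequence of $\Br^m(b_\1,b_\2)$: the element $(b_\1 b_\2 b_1\cdots)_m=(b_\2 b_\1 b_\2\cdots)_m$ squares to the full twist, and when $m$ is even both sides equal $(b_\1 b_\2)^{m}$, when $m$ is odd one uses the palindrome identities as in the Remark after \Cref{def:braid group}. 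This gives the inverse homomorphism, hence the isomorphism.

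The main obstacle I expect is the careful identification of which local twist is $\x$ and which is $\y$ relative to the orientation of the $(m+2)$-gon, and correspondingly getting the exponents in $\x^2=\y^m$ versus the braid word $\Br^m(b_\1,b_\2)$ to match on the nose — in particular the parity-of-$m$ case distinction when translating $(b_\1 b_\2)^2=(b_\2 b_\1)^m$ back and forth. This is exactly the point where \cite{KQ2} does the pentagon case ($m=3$) by hand, and the general $I_2(m)$ case is the same computation with $m$ kept as a parameter; once the dictionary between the polygonal relation and the braid relation is pinned down, everything else is formal.
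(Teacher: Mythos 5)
Your first half (well-definedness and surjectivity of $b_\oi\mapsto t_\oi$ via the alternating-word computation $t_\1t_\2t_\1\cdots$ versus $t_\2t_\1t_\2\cdots$ in terms of $\x,\y$ and the polygonal relation) is correct and is exactly what the paper does. The injectivity argument, however, has two concrete gaps. First, the claim that $\pi_1(\ceg(\I))\cong\<\x,\y\mid\x^2=\y^m\>$ ``is exactly the braid group $\Br\I$'' fails for even $m$: the abelianization of $\<u,v\mid u^2=v^m\>$ is $\mathbb{Z}\oplus\mathbb{Z}/2$ when $m$ is even, while $\Br\I$ abelianizes to $\mathbb{Z}^2$ (the relation $(ab)^{m/2}=(ba)^{m/2}$ dies in the abelianization). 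This is not a bookkeeping issue to be absorbed into a parity case-check: the paper is forced to use a genuinely different one-relator presentation of $\Br\I$ for even $m$, namely $\<\sigma,\tau\mid(\sigma\tau)^{m/2}=(\tau\sigma)^{m/2}\>$ with $\sigma=b_\1b_\2b_\1$, $\tau=b_\1^{-1}$, and to match $\pi_1(\ceg(\I))$ against that presentation instead. (For odd $m$ your presentation is the right one, with $u=\underbrace{b_\1b_\2\cdots b_\1}_m$ and $v=b_\1b_\2$, and this is the paper's Case 1.) Note also that $\x$ and $\y$ are single edges between \emph{distinct} vertices of the polygon, not loops at the base point, so they are not elements of $\pi_1(\ceg(\I))$; the loops are $t_\1=\x\y$ and $t_\2=\y\x$, and the element playing the role of $u$ is the full circuit $x_1\cdots x_{m+2}=\underbrace{t_\1t_\2\cdots t_\1}_m$, not ``$\x$''.

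Second, the ``cleaner route'' is built on the identity $(b_\1b_\2)^2=(b_\2b_\1)^m$ in $\Br\I$, which is false for every $m\ge3$: the two sides have total word lengths $4$ and $2m$, and already in the abelianization of $\Br\I$ (which is $\mathbb{Z}$ for $m$ odd and $\mathbb{Z}^2$ for $m$ even) they disagree unless $m=2$. The correct consequence of $\Br^m(b_\1,b_\2)$ is $\bigl(\underbrace{b_\1b_\2\cdots b_\1}_m\bigr)^2=(b_\1b_\2)^m$ for $m$ odd (half-twist squared equals full twist), and for $m$ even the analogous relation $u^2=v^m$ with $u=v^{m/2}$ is vacuous, which is again why the even case needs the $(\sigma,\tau)$ presentation. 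Moreover, the assignment $\x\mapsto b_\1b_\2$, $\y\mapsto b_\2b_\1$ does not even give $t_\1=\x\y\mapsto b_\1$; to define a splitting you would have to specify a functor on all $2(m+2)$ directed edges of the groupoid and check the polygonal relation for it, which is essentially the paper's ``unpacked view'' computation. So the overall strategy is salvageable, but as written both of your proposed injectivity arguments break at even $m$ (and the second breaks for all $m\ge 3$).
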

\begin{proof}
Recall that $\Br\I$ has a canonical presentation
\begin{equation}\label{eq:br}
    \langle b_\1,b_\2 \mid \Br^m(b_\1,b_\2)
\rangle.
\end{equation}
There is an alternative presentation of $\Br\I$:
\begin{itemize}
    \item If $2\nmid m$, then the presentation \ref{eq:br} is equivalent to
    \[\langle u,v\mid u^2=v^m\rangle\] for
    \[\begin{cases}
    u=\underbrace{b_\1b_\2b_\1\ldots b_\1}_m\\
  v=b_\1b_\2
\end{cases}.\]
\item If $2\mid m$, then the presentation \ref{eq:br} is equivalent to
\[\langle \sigma,\tau\mid (\sigma\tau)^{m/2}=(\tau\sigma)^{m/2}\rangle\]
for
\[\begin{cases}
    \sigma=b_\1 b_\2 b_\1\\
  \tau=b_\1^{-1}
\end{cases}.\]
\end{itemize}

Recall that the groupoid $\ceg(\I)$ is constructed from $(m+2)$-gon by replacing each edge by a 2-cycle and adding the polygon relation $x^2=y^m$. By \Cref{lem:conj}, the group $\CBr(\I)$ is isomorphic to the fundamental group of $\ceg(\I)$. Moreover, local twists $t_{\1}$ and $t_{\2}$ are generators of $\pi_1(\ceg(\I))$.

{\bf Case 1.} If $2\nmid m$, we take two new generators 
\[u=x_1\ldots x_{m+1}x_{m+2}=\underbrace{t_{\1}t_{\2}t_{\1}\ldots t_\1}_m\text{ and } v=y_{m+2}\ldots y_2y_1=t_{\1}t_{\2}.\]
By a direct calculation, $u^2=v^m$. Moreover, this is the only relation for $\pi_1(\ceg(\I))$, since $u^2$ and $v^m$ form the boundary of the only 2-cell in $\ceg(\I)$, see Figure~\ref{fig:modd}. Therefore $\Br\I=\CBr(\I)$.

{\bf Case 2.} If $2\mid m$, we take two generators $\sigma=t_{\1}v$ and $\tau=t_{\1}^{-1}$, where $v=y_{m+2}\ldots y_2y_1=t_{\2}t_{\1}$ and $t_{\1}=x_1y_1$. Since $t_{\2}=\tau\sigma\tau$ and $t_{\1}=\tau^{-1}$. A direct calculation shows that $(\sigma\tau)^{m/2}=(\tau\sigma)^{m/2}$. Moreover, this is the unique relation for $\pi_1(\ceg(\I))$. For $m=4$, see Figure~\ref{fig:meven}. Thus $\Br\I=\CBr(\I)$ in this case.

It follows from the definition that $t_{\1}=xy, t_\2=yx$ and $x^2=y^m$, see the right picture in Figure~\ref{fig:ceg i2}. The following two equations show that $t_{\1}$ and $t_{\2}$ satisfy the braid relation.
	\begin{gather}
		\underbrace{t_{\1}t_\2 t_{\1}\cdots}_m=\begin{cases}
			y^{m(l+1)}, & m=2l,\\
			xy^{m(l+1)}, & m=2l+1;
		\end{cases}
	\end{gather}
	and
	\begin{gather}
		\underbrace{t_\2 t_{\1}t_\2\cdots}_m=\begin{cases}
			y^{m(l+1)}, & m=2l,\\
			y^{m(l+1)}x, & m=2l+1.
		\end{cases}
	\end{gather}
Thus $t_{\1}=xy, t_\2=yx$ are the generators of $\CBr(\I)$ which satisfy the braid relation. $b_\oi\mapsto t_\oi, \oi\in\{\1,\2\}$ gives an isomorphism between $\Br\I$ and $\CBr(\I)$
\end{proof}
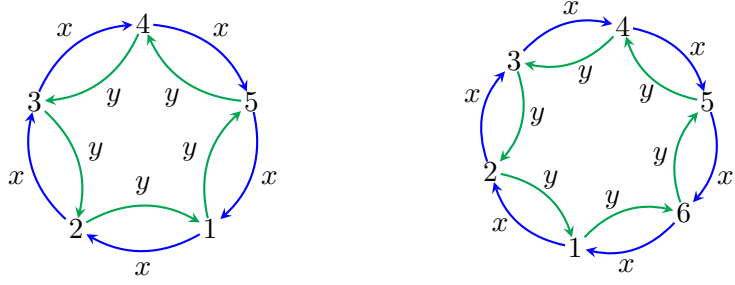
\begin{figure}
	\centering
	\begin{tikzpicture}[Rline/.style={thick,red}, Bline/.style={thick,blue},Gline/.style={thick,dotted,green},
		bline/.style={thick,dotted,blue},rline/.style={thick,dotted,red},>=stealth,scale=0.5]
		
        \begin{scope}[shift={(-6,1)}]
			\foreach \j in {0,1,2,3,4}
			{\node (a\j) at (18+72*\j:3){};
				\draw (-18+72*\j:3.5)node {$x$};
				\draw (-18+72*\j:1.3)node {$y$};}
			
			\draw (a0)node {5};
			\draw (a1)node {4};
			\draw (a2)node {3};
			\draw (a3)node {2};
			\draw (a4)node {1};
			
			\foreach \a/\b in {0/1,1/2,2/3,3/4,4/0}
			\draw [-{stealth},thick,Green] (a\a) [bend left] to (a\b);
			
			\foreach \a/\b in {1/0,0/4,4/3,3/2,2/1}
			\draw [-{stealth},thick,blue] (a\a) [bend left] to (a\b);
		\end{scope}

		\begin{scope}[shift={(6,1)}]
			\foreach \j in {0,1,2,3,4,5}
			{\node (a\j) at (18+60*\j:3){};
				\draw (-18+60*\j:3.5)node {$x$};
				\draw (-18+60*\j:1.7)node {$y$};}
			
			\draw (a0)node {5};
			\draw (a1)node {4};
			\draw (a2)node {3};
			\draw (a3)node {2};
			\draw (a4)node {1};
			\draw (a5)node {6};
			
			\foreach \a/\b in {0/1,1/2,2/3,3/4,4/5,5/0}
			\draw [-{stealth},thick,Green] (a\a) [bend left] to (a\b);
			
			\foreach \a/\b in {1/0,0/5,5/4,4/3,3/2,2/1}
			\draw [-{stealth},thick,blue] (a\a) [bend left] to (a\b);
		\end{scope}
		
	\end{tikzpicture}
	\caption{$\CEG(\Delta)$ for $\Delta$ of types $I_2(3)$ and $I_2(4)$}\label{fig:i_2,4}
\end{figure}

\begin{figure}
    \centering
\begin{tikzpicture}[scale=1.3,yscale=1.2]
\draw[cyan!10,fill=cyan!10] (0,0)to(5,1)to(10,0);
\draw[dashed,red,thick,->-=.4,>=stealth] (0,0)to(5,1);
\draw[dashed,red,thick,->-=.4,>=stealth] (5,1)to(10,0);
\draw[blue,thick,->-=.5,>=stealth](10,0)to(11,1);
\foreach \x in {1,2,3,4,5}{
\begin{scope}[shift={(2*\x-2,0)}]
  \foreach \a/\b/\c/\d in {0/0/1/1,1/1/2/0} {
  \draw[blue,thick,->-=.5,>=stealth] (\a,\b) to (\c,\d);}
  \foreach \a/\b/\c/\d in {0/0/0.5857/-0.5857,0.5857/-0.5857/2-0.5857/-0.5857,2-0.5857/-0.5857/2/0} {
  \draw[cyan!10,fill=cyan!10] (\a,\b) to (\c,\d) to (1,0) --cycle;
  \draw[red](0,0)\nn node[left]{\footnotesize{$V_\x$}};
  \draw[Green,thick,->-=.5,>=stealth] (\a,\b) to (\c,\d);}
\draw[orange](1,0)node{$\x$};
\end{scope}
}
\foreach \x in {4,5}{
\begin{scope}[shift={(2*\x-7,1)}]
  \foreach \a/\b/\c/\d in {0/0/0.5857/0.5857,0.5857/0.5857/2-0.5857/0.5857,2-0.5857/0.5857/2/0} {
  \draw[Green,thick,->-=.5,>=stealth] (\a,\b) to (\c,\d);}
\draw[orange](1,0)node{$\x$};  \draw[red](0,0)\nn node[left]{\footnotesize{$V_\x$}};
\end{scope}
}
\foreach \x in {1,2,3}{
\begin{scope}[shift={(2*\x+3,1)}]
  \foreach \a/\b/\c/\d in {0/0/0.5857/0.5857,0.5857/0.5857/2-0.5857/0.5857,2-0.5857/0.5857/2/0} {
  \draw[Green,thick,->-=.5,>=stealth] (\a,\b) to (\c,\d);}
    \draw[red](0,0)\nn node[left]{\footnotesize{$V_\x$}};
\draw[orange](1,0)node{$\x$};
\end{scope}
}
\draw[blue]
    (.4,.6)node{\footnotesize{$x_1$}}
    (1.6,.6)node{\footnotesize{$x_2$}}
    (2+.4,.6)node{\footnotesize{$x_3$}}
    (2+1.6,.6)node{\footnotesize{$x_4$}}
    (2+2+.4,.6)node{\footnotesize{$x_5$}}
    (2+2+1.6,.6)node{\footnotesize{$x_1$}}
    (6+.4,.6)node{\footnotesize{$x_2$}}
    (6+1.6,.6)node{\footnotesize{$x_3$}}
    (8+.4,.6)node{\footnotesize{$x_4$}}
    (8+1.6,.6)node{\footnotesize{$x_5$}};
\draw[red]
    (0,0)\nn node[left]{\footnotesize{$V_1$}}    (10,0)\nn node[right]{\footnotesize{$V_1$}}
        (4-0.5857,-0.5857)\nn node[below]{\footnotesize{$V_1$}}
        (6+0.5857,-0.5857)\nn node[below]{\footnotesize{$V_1$}};
\draw[Green]
    (.2,-.3555555555)node{\footnotesize{$y_5$}}
    (1,-.7)node{\footnotesize{$y_4$}}
    (1.8,-.35)node{\footnotesize{$y_3$}}
    (2+.2,-.3555555555)node{\footnotesize{$y_2$}}
    (2+1,-.7)node{\footnotesize{$y_1$}}
    (2+1.8,-.35)node{\footnotesize{$y_5$}}
    (4+.2,-.3555555555)node{\footnotesize{$y_4$}}
    (4+1,-.7)node{\footnotesize{$y_3$}}
    (4+1.8,-.35)node{\footnotesize{$y_2$}}
    (6+.2,-.3555555555)node{\footnotesize{$y_1$}}
    (6+1,-.7)node{\footnotesize{$y_5$}}
    (6+1.8,-.35)node{\footnotesize{$y_4$}}
    (8+.2,-.3555555555)node{\footnotesize{$y_3$}}
    (8+1,-.7)node{\footnotesize{$y_2$}}
    (8+1.8,-.35)node{\footnotesize{$y_1$}};
\end{tikzpicture}
    \caption{Unpacked view of $I_2(3)$}
    \label{fig:modd}
\end{figure}

\begin{figure}
	\centering
	\begin{tikzpicture}[scale=0.7,transform shape]
		
		\filldraw[cyan!15,thick,dashed] (-12,0) .. controls (-8,2) and (-4,2) ..
		(0,0) --(-1,-2)--(-3,-2)--(-4,0)--(-5,-2)--(-7,-2)--(-8,0)--(-9,-2)--(-11,-2);
		
		\filldraw[cyan!15,thick,dashed] (-12,0) .. controls (-8,2) and (-4,2) ..
		(0,0) --(2,2)--(1,4)--(-1,4)--(-2,2)--(-3,4)--(-5,4)--(-6,2)--(-7,4)--(-9,4)--(-10,2);
		
		\foreach \j in {0,1,2,3,4}
		{\node (a\j)[red] at (-12+4*\j,0){$\bullet$};}
		
		\foreach \j in {0,1,2,3,4}
		{\node (b\j)[red] at (-10+4*\j,2){$\bullet$};}
		
		\foreach \j in {0,1,2,4,5,6,8,9,10,12,13,14}
		{\node (c\j)[cyan] at (-11+\j,-2){$\bullet$};}
		\foreach \j in {1,3,5}
		{\node (\j)[red] at (-12+2*\j,-0.4){\j};}
		\foreach \j in {2,4,6}
		{\node (\j)[red] at (-12+2*\j,2.3){\j};}
		\node [red] at (2,-0.4){1};
		\node [red] at (4,2.3){2};
		
		\draw[-{stealth},thick,blue] (a0)to node[above]{$x_1$}  (b0);
		\draw[-{stealth},thick,blue] (a1)to node[above]{$x_3$}  (b1);
		\draw[-{stealth},thick,blue] (a2)to node[above]{$x_5$}  (b2);
		\draw[-{stealth},thick,blue] (a3)to node[above]{$x_1$}  (b3);
		\draw[-{stealth},thick,blue] (a4)to node[above]{$x_1$}  (b4);
		
		\draw [-{stealth},thick,blue] (b0)  to node[above,blue]{$x_2$} (a1);
		\draw [-{stealth},thick,blue] (b1)  to node[above,blue]{$x_4$} (a2);
		\draw [-{stealth},thick,blue] (b2)  to node[above,blue]{$x_6$} (a3);
		\draw [-{stealth},thick,blue] (b3)  to node[above,blue]{$x_2$} (a4);


		\node[green!70!black] at (-11.8,-0.8){$y_6$};
		\node[green!70!black] at (-7.8,-0.8){$y_2$};
		\node[green!70!black] at (-3.8,-0.8){$y_4$};
		\node[green!70!black] at (0.2,-0.8){$y_1$};
		
		\node[green!70!black] at (-8.7,-0.8){$y_3$};
		\node[green!70!black] at (-4.7,-0.8){$y_5$};
		\node[green!70!black] at (-0.7,-0.8){$y_1$};
		\node[green!70!black,below] at (-10.5,-2){$y_5$};
		\node[green!70!black,below] at (-9.5,-2){$y_4$};
		\node[green!70!black,below] at (-6.5,-2){$y_1$};
		\node[green!70!black,below] at (-5.5,-2){$y_6$};
		\node[green!70!black,below] at (-2.5,-2){$y_3$};
		\node[green!70!black,below] at (-1.5,-2){$y_2$};
		\node[green!70!black,below] at (-9.7,3.4){$y_1$};
		\node[green!70!black,below] at (-5.7,3.4){$y_3$};
		\node[green!70!black,below] at (-1.7,3.4){$y_5$};
		\node[green!70!black,below] at (2.3,3.4){$y_1$};
		
		\node[green!70!black,below] at (-6.3,3.4){$y_4$};
		\node[green!70!black,below] at (-2.3,3.4){$y_6$};
		\node[green!70!black,below] at (1.7,3.4){$y_2$};

		\node[green!70!black,above] at (-8.5,4){$y_6$};
		\node[green!70!black,above] at (-7.5,4){$y_5$};
		\node[green!70!black,above] at (-4.5,4){$y_2$};
		\node[green!70!black,above] at (-3.5,4){$y_3$};
		\node[green!70!black,above] at (-0.5,4){$y_4$};
		\node[green!70!black,above] at (0.5,4){$y_3$};
		
		\foreach \i/\j in {0/0,1/4,2/8,3/12}
		{\draw[-{stealth},thick,green!70!black](a\i)to (c\j);}
		
		\foreach \i/\j in {2/1,6/2,10/3,14/4}
		{\draw[-{stealth},thick,green!70!black](c\i)to (a\j);}
		
		\foreach \i/\j in {0/1,1/2,4/5,5/6,8/9,9/10,12/13,13/14}
		{\draw[-{stealth},thick,green!70!black](c\i)to (c\j);}	
		
		\foreach \j in {0,1,2,4,5,6,8,9,10,12,13,14}
		{\node (d\j)[cyan] at (-9+\j,4){$\bullet$};}
		
		\foreach \i/\j in {0/0,1/4,2/8,3/12}
		{\draw[-{stealth},thick,green!70!black](b\i)to(d\j);}
		
		\foreach \i/\j in {2/1,6/2,10/3,14/4}
		{\draw[-{stealth},thick,green!70!black](d\i)to(b\j);}
		
		\foreach \i/\j in {0/1,1/2,4/5,5/6,8/9,9/10,12/13,13/14}
		{\draw[-{stealth},thick,green!70!black](d\i)to(d\j);}

		\draw[red,thick,dashed,-{stealth}] (-12,0) .. controls (-8,2) and (-4,2) ..(0,0);

		\draw[red](-12,0) node[above]{\normalsize{$V_1$}};
		\draw[red](-8,0) node[above]{\normalsize{$V_3$}};
		\draw[red](-4,0) node[above]{\normalsize{$V_5$}};
		\draw[red](4,0) node[above]{\normalsize{$V_3$}};
		\draw[red](0,0) node[above]{\normalsize{$V_1$}};
		
		\draw[red](-9,4) node[above]{\normalsize{$V_1$}};
		\draw[red](-8,4) node[above]{\normalsize{$V_6$}};
		\draw[red](-7,4) node[above]{\normalsize{$V_5$}};
		\draw[red](-5,4) node[above]{\normalsize{$V_3$}};
		\draw[red](-4,4) node[above]{\normalsize{$V_2$}};
		\draw[red](-3,4) node[above]{\normalsize{$V_1$}};
		\draw[red](-1,4) node[above]{\normalsize{$V_5$}};
		\draw[red](0,4) node[above]{\normalsize{$V_4$}};
		\draw[red](1,4) node[above]{\normalsize{$V_3$}};
		
		\draw[red](-10,2) node[left]{\normalsize{$V_2$}};
		\draw[red](-6,2) node[left]{\normalsize{$V_4$}};
		\draw[red](-2,2) node[left]{\normalsize{$V_6$}};
		\draw[red](2,2) node[left]{\normalsize{$V_2$}};
		\draw[red](6,2) node[left]{\normalsize{$V_4$}};

		\draw[red](-11,-2) node[below]{\normalsize{$V_6$}};
		\draw[red](-10,-2) node[below]{\normalsize{$V_5$}};
		\draw[red](-9,-2) node[below]{\normalsize{$V_4$}};
		
		\draw[red](-7,-2) node[below]{\normalsize{$V_2$}};
		\draw[red](-6,-2) node[below]{\normalsize{$V_1$}};
		\draw[red](-5,-2) node[below]{\normalsize{$V_6$}};
		
		\draw[red](-3,-2) node[below]{\normalsize{$V_4$}};
		\draw[red](-2,-2) node[below]{\normalsize{$V_3$}};
		\draw[red](-1,-2) node[below]{\normalsize{$V_2$}};
		
	\end{tikzpicture}
	\caption{Unpacked view of $I_2(4)$}\label{fig:meven}
\end{figure}

The proof of Proposition~\ref{prop:fund gp} shows that each edge $\epsilon:\oi\overset{m}{-}\oj$ in $\Delta$ gives a doubled $(m+2)$-gon at $\Y_\Delta$ with an $(m+2)$-gon relation. By Lemma~\ref{lem:surj}, the relation $\Br^m(t_\oi,t_\oj)$ holds in $\CBr(\Delta)$, and hence we have a group surjection
\begin{gather}
    \begin{array}{rcl}
        \Psi:\Br\Delta&\longrightarrow&\CBr(\Delta) \\
        b_\oi&\mapsto&t_\oi,
    \end{array}
\end{gather}
where $b_\oi$ (resp. $t_\oi$) is the generator of $\Br\Delta$ (resp. $\CBr(\Delta)$) corresponding to the vertex $\oi\in\Delta_0$.	

Now we state our main theorem which shows that the morphism above is indeed an isomorphism.
\begin{thm}\label{thm:main}
Let $\overrightarrow{\Delta}$ be a finite weighted quiver. For any weighted CTO $\Y$ in $\CEG(\Delta)$, the morphism
\begin{gather}\label{eq:psi}
\begin{array}{rcl}
    \Psi:\Br(\wtq_\Y,\W_\Y)&\longrightarrow&\CBr_\Delta(\Y)\\
        b_\oi&\mapsto& t_\oi
\end{array}
\end{gather}
is a group isomorphism, sending the standard generators to the standard ones (for $\oi\in \wtq_{\Y}$).
\end{thm}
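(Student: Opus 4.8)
The plan is to establish surjectivity at once, reduce the isomorphism from an arbitrary weighted CTO $\Y$ to the initial seed $\Y_\Delta$ by transporting along weighted mutations, and then settle $\Y_\Delta$ by folding down to the simply-laced case, where it is \cite[Thm.~2.16]{KQ2}.

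Surjectivity is immediate: $\Psi$ sends the standard generators $b_\oi$ to the local twists $t_\oi$, which generate $\CBr_\Delta(\Y)$ by \Cref{def:ceg-wtq}. For $\Y=\Y_\Delta$ one has $\W_{\Y_\Delta}=0$ and $\Br(\wtq_{\Y_\Delta},\W_{\Y_\Delta})=\Br\Delta$, and the well-definedness of $\Psi\colon\Br\Delta\to\CBr(\Delta)$ — which uses only the braid relations $\Br^m(t_\oi,t_\oj)$ carried by the doubled $(m+2)$-gons — is the surjection recorded before the statement. To pass to a general $\Y$, I would first check, for $\oi\in\wtq_\Y$ and $\Y'=\mu_\oi(\Y)$, that the square
\[
\begin{tikzcd}
\Br(\wtq_\Y,\W_\Y)\ar[r,"\Psi_\Y"]\ar[d,"\theta^\flat_\oi"'] & \CBr_\Delta(\Y)\ar[d,"\ad_\oi"]\\
\Br(\wtq_{\Y'},\W_{\Y'})\ar[r,"\Psi_{\Y'}"'] & \CBr_\Delta(\Y')
\end{tikzcd}
\]
commutes: on the generators $b_\oj$, formula \eqref{eq:flat} for $\theta^\flat_\oi(b_\oj)$ becomes, under $b\mapsto t$, exactly the conjugation formula of \Cref{lem:conj} for $\ad_\oi(t_\oj)$, once \eqref{eq:local twist} and \eqref{eq:ad} are used to rewrite a weighted twist through its unfolded factors. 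Since $\theta^\flat_\oi$ is an isomorphism by \Cref{thm:mu-present} and $\ad_\oi$ is one by \Cref{lem:conj}, transporting $\Psi_{\Y_\Delta}$ along any mutation path produces a surjective homomorphism $\Br(\wtq_\Y,\W_\Y)\to\CBr_\Delta(\Y)$ with $b_\oi\mapsto t_\oi$ — necessarily the map $\Psi_\Y$ of the statement — which is an isomorphism for all $\Y$ if and only if it is for $\Y_\Delta$. It therefore remains to prove that $\Psi\colon\Br\Delta\to\CBr(\Delta)$ is injective.

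If $\Delta$ is simply laced, this is \cite[Thm.~2.16]{KQ2}; the case $\Delta=I_2(m)$ is \Cref{lem:surj}. For the remaining types I would fix a finite weighted folding $\Fold$ as in \Cref{ex:fold} with $\Lambda$ simply laced. The vertex-set inclusion $\CEG(\Delta)\subseteq\CEG(\Lambda)$, together with the identification of a weighted mutation edge of $\CEG(\Delta)$ with the path $x_{i_1}\cdots x_{i_p}$ of forward mutations in $\CEG(\Lambda)$, defines a functor $\mathcal{W}(\CEG(\Delta))\to\mathcal{W}(\CEG(\Lambda))$ of path groupoids. The point is that it carries each $(m+2)$-gon relation $\x^2=\y^m$ of \Cref{def:ceg-wtq} to a loop whose class in $\CBr(\Lambda)\cong\Br\Lambda$ is $\iota_{f_\epsilon}(u)^2\iota_{f_\epsilon}(v)^{-m}=1$, where $f_\epsilon$ is the restriction of $f$ over the relevant edge $\epsilon$ of $\Delta$ and $u,v$ are the generators of \Cref{lem:surj} — this is the ``unpacking'' of the doubled $(m+2)$-gon into squares and pentagons of $\CEG(\Lambda)$, illustrated in Figures~\ref{fig:modd} and \ref{fig:meven}. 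Hence the functor descends to $\ceg(\Delta)\to\ceg(\Lambda)$ and, by \eqref{eq:local twist}, to a homomorphism $\jmath\colon\CBr(\Delta)\to\CBr(\Lambda)$ with $\jmath(t_\oi)=\prod_{i\in f^{-1}(\oi)}t_i$. On generators $\jmath\circ\Psi_\Delta$ and $\Psi_\Lambda\circ\iota_f$ agree, hence as homomorphisms; since $\iota_f$ is injective by \Cref{thm:cri} and $\Psi_\Lambda$ is an isomorphism, $\Psi_\Delta$ is injective, hence an isomorphism.

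The hard part will be the ``unpacking'' in the last step: confirming that the image in $\ceg(\Lambda)$ of every $(m+2)$-gon relation is genuinely a consequence of the square and pentagon relations there, uniformly over all non-simply-laced types and all admissible weighted foldings. This is a finite verification, but it depends on a precise description of $\CEG(\Delta)$ inside $\CEG(\Lambda)$ — namely as its fusion-stable part, cf.\ \cite{QZx} — and its rank-$2$ instance is \Cref{lem:surj}. A folding-free alternative is to compute $\pi_1(\ceg(\Delta),\Y_\Delta)$ directly: by \Cref{prop:fund gp} its relations are generated by the $(m+2)$-gon faces, each of which — based at $\Y_\Delta$ and evaluated via \Cref{rem:ceg} and \Cref{lem:surj} — yields a braid relation $\Br^m$ among conjugates of the $t_\oi$, and one then checks, with \Cref{thm:mu-present} along a mutation path, that these all follow from the defining relations of $\Br\Delta$. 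The residual bookkeeping, notably matching the conjugating elements in the square above, is routine.
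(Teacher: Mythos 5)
Your overall strategy coincides with the paper's: reduce to the initial seed $\Y_\Delta$ by transporting $\Psi$ along the commuting square built from $\theta^\flat_\oi$ (\Cref{thm:mu-present}) and $\ad_\oi$ (\Cref{lem:conj}), record surjectivity from the doubled $(m+2)$-gons, and prove injectivity by factoring $\Psi_\Lambda\circ\iota_f$ through a functor $\ceg(\Delta)\to\ceg(\Lambda)$ and invoking \Cref{thm:cri} together with \cite[Thm.~2.16]{KQ2}. The commutative diagram you end with is exactly \eqref{eq:last} in the paper.

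The one place where your argument is not complete is the step you yourself flag as ``the hard part'': showing that the image in $\ceg(\Lambda)$ of each $(m+2)$-gon relation is a consequence of the square and pentagon relations there, so that the functor on path groupoids actually descends to $\ceg(\Delta)\to\ceg(\Lambda)$. You propose this as ``a finite verification'' over all types and foldings, or alternatively a direct computation of $\pi_1(\ceg(\Delta),\Y_\Delta)$; neither is carried out, and the case-by-case route would be substantial (e.g.\ for $E_8\to H_4$). The paper closes this gap uniformly and without case analysis: by \cite[Thm.~2.10]{KQ2} one has $\CEG(\Lambda)=\EG\D_3(\Lambda)/\Br\Lambda$ with fundamental domain the interval $[\h_\Y,\h_\Y[1]]$ for the heart $\h_\Y$ corresponding to $\Y$ (using \cite[\S~10]{KQ} to re-base the fundamental domain at $\Y$ rather than at $\Y_\Delta$), so the oriented $(m+2)$-gon lifts to a cycle in that interval of the heart exchange graph, and \cite[Lem.~6.1]{QW} decomposes any such cycle into squares and pentagons. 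If you want your proof to stand on its own, you need either this categorical lifting argument or an explicit verification in the spirit of Figures~\ref{fig:modd} and \ref{fig:meven} for every folding in \Cref{ex:fold}; as written, the descent of the functor --- and hence the existence of your map $\jmath$ --- is asserted rather than proved. The rest of your argument (the transport square, the identification of $\jmath\circ\Psi_\Delta$ with $\Psi_\Lambda\circ\iota_f$ on generators via \eqref{eq:local twist}) is correct and matches the paper.
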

\begin{proof}
By Proposition~\ref{lem:conj} and Theorem~\ref{thm:mu-present}, it is enough to show the isomorphism for the initial weighted CTO $\Y_\Delta$.
By the previous discussion,  we only need to show that $\Psi$ is injective.

Recall that there is a natural embedding from $\CEG(\Delta)$ to $\CEG(\Lambda)$.
Now, we would like to make it more precise, that there is an injective map/functor
\begin{gather}\label{eq:f*}
    f^*\colon\ceg(\Delta)\to\ceg(\Lambda)
\end{gather}
sending $\mu^\sharp_\oi$ to the path $\prod_{i\in f^{-1}(\oi)}\mu^\sharp_i$ (to up homotopy).
Note that on the level of vertices/objects and arrows/morphisms,
$f^*$ is an injective map. The only thing we need to check is that the $(m+2)$-gon relations in $\ceg(\Delta)$ hold in $\ceg(\Lambda)$.
We use a similar method as in the proof of \Cref{prop:fund gp}.

Choose any vertex $\Y$ in $\ceg(\Delta)$ and edge $\oj\xrightarrow{m}\oi\in\wtq_{\Y}$, we have the corresponding $(m+2)$-gon relation.
We need to show that it is generated by oriented square and pentagon relations in $\CEG(\Lambda)$.
By \cite[Thm.~2.10]{KQ2}, $\CEG(\Lambda)=\EG\D_3(\Lambda)/\Br\Lambda$, where $\D_3(\Lambda)$ is the 3-Calabi-Yau category associated to $\overrightarrow{\Lambda}$.
A fundamental domain of $\EG\D_3(\Lambda)/\Br\Lambda$ is the interval $[\h_{\Lambda},\h_{\Lambda}[1]]$ for the canonical heart $\h_{\Lambda}$ of $\D_3(\Lambda)$.
Thus, $\Y$ corresponds to a unique heart $\h_{\Y}$ in $[\h_{\Lambda},\h_{\Lambda}[1]]$.
Moreover, by \cite[\S~10]{KQ} (cf. \cite[Thm.~4.7]{QZx}),
the interval $[\h_{\Y},\h_{\Y}[1]]$ can be regarded as a fundamental domain of $\EG\D_3(\Lambda)/\Br\Lambda$.
Thus the oriented $(m+2)$-gon in $\CEG(\Delta)$ lifts to a cycle in $[\h_{\Y},\h_{\Y}[1]]$.
By \cite[Lem.~6.1]{QW}, such cycle decomposes into squares and pentagons as required.

In particular, \eqref{eq:f*} induces a map, still denoted by $f^*$,
between the corresponding fundamental groups, which fits into the following diagram:
\begin{equation}\label{eq:last}
    \begin{tikzcd}
        \Br\Delta\ar[r,"\Psi_\Delta"]\ar[d,"\iota_f"',hookrightarrow] & \CBr(\Delta)\ar[d,"f^\ast"]\\
        \Br\Lambda\ar[r,"\Psi_\Lambda"']&\CBr(\Lambda),
    \end{tikzcd}
\end{equation}
where $\Psi_\Lambda$ is the isomorphism in \cite[Thm~2.16]{KQ2} and $\iota_f$ is the group injection in Theorem~\ref{thm:cri}.
Examining the standard generators, one gets that such a diagram is commutative.
Thus, $\Psi_\Delta$ is injective because $\Psi_\Lambda\circ\iota_f$ is,
which completes the proof.
\end{proof}

\begin{Ex}
    Consider the weighted graph $\Delta=H_3$, we have the cluster exchange graph $\uCEG(H_3)$ in Figure \ref{fig:ABH}. Denote $\mathbf{Y}$ the common vertex of three heptagons in the front of $\uCEG(H_3)$. The corresponding quiver $\wtq_\Y$ is the weighted quiver IV in Figure \ref{fig:cycle-present}. The potential $\W_\Y$ is the 3-cycle. Then we have
    \begin{equation}
        \CBr_{H_3}(\mathbf Y)\cong\Br(\wtq_\Y,\W_\Y)
    \end{equation}
    which is generated by $b_\1,b_\2,b_\3$ and the  following relations
    \[\Br^5(b_\1,b_\2),\quad \Br^5(b_\2,b_\3), \quad \Br^5(b_\1,b_\3),\quad \Co(b_\1,b^{b_\3b_\2}_\2),\quad  \Br(b_\1,b^{b_\2}_\3).\]
\end{Ex}

\begin{remark}
A final remark is that we expect \eqref{eq:psi} holds for any finite mutation type quivers with potential.
\end{remark}


\end{document}